\documentclass[11pt,reqno,a4paper,twoside]{extarticle}

\usepackage[osf]{newtxtext}

\usepackage[left=3cm,right=3cm,top=2.2cm,bottom=2.2cm]{geometry}

\usepackage{graphicx, varioref, amsfonts}
\usepackage{amsthm,amssymb,amsmath,mathrsfs,mathtools,stmaryrd} 
\usepackage{dsfont}
\usepackage{upgreek} 
\usepackage{esint} 
\usepackage{breakurl}
\usepackage[full]{textcomp}
\usepackage{empheq}
\usepackage{enumerate,enumitem}
\usepackage{soul}
\usepackage{cancel}
\usepackage{amscd}
\usepackage{srcltx}
\usepackage{ifthen}
\usepackage{float}
\usepackage{color}
\usepackage{comment}
\usepackage{chngcntr}
\usepackage{etoolbox}
\usepackage{fancyhdr}
\usepackage{caption}

\usepackage{titlefoot}


\usepackage{authblk} 
\setlength{\marginparwidth}{3.5cm}


\theoremstyle{plain}
\newtheorem{lemma}{Lemma}[section]
\newtheorem{proposition}{Proposition}[section]

\newtheorem{theorem}{Theorem}[section]
\newtheorem{assumption}{Assumption}[section]

\theoremstyle{definition}

\newtheorem{remark}{Remark}[section]




%
\newlist{todolist}{itemize}{2}
\setlist[todolist]{label=$\square$}
\usepackage{pifont}
%
%

%
\fancyhf{}
\pagestyle{fancy}
\fancyfoot[C]{\thepage}

\fancyhead[CE]{\small\shorttitle}
\fancyhead[CO]{\small\authors}


\usepackage{hyperref}
\usepackage{cleveref}
\begin{document}

\title{Peng's Maximum Principle for Stochastic Partial Differential Equations}
\newcommand\shorttitle{Peng's Maximum Principle for SPDEs}
\date{May 11, 2021}

\author{Wilhelm Stannat}
\author{Lukas Wessels}
\newcommand\authors{Wilhelm Stannat and Lukas Wessels}

\affil{\small Technische Universit\"at Berlin}

\maketitle

\unmarkedfntext{\textit{Mathematics Subject Classification (2020) ---} Primary: 93E20, 49K45; Secondary: 60H15.}

\unmarkedfntext{\textit{Keywords and phrases ---} stochastic maximum principle, Pontryagin maximum principle, stochastic optimal control, necessary condition, spike variation.}

\unmarkedfntext{\textit{Email}: \textbullet$\,$ stannat@math.tu-berlin.de $\,$\textbullet$\,$ wessels@math.tu-berlin.de}


\begin{abstract}
  We extend Peng's maximum principle for semilinear stochastic partial differential equations (SPDEs) in one space-dimension with non-convex control domains and control-dependent diffusion coefficients to the case of general cost functionals with Nemytskii-type coefficients. Our analysis is based on a new approach to the characterization of the second order adjoint state as the solution of a function-valued backward SPDE. 
\end{abstract}


\section{Introduction}
In the 1950s, Pontryagin et al. established a necessary condition for the optimal control of finite-dimensional deterministic control problems, marking a milestone in mathematical control theory, see \cite{pontryagin1962}. Following their work, there have been many attempts to generalize this result to the optimal control of stochastic differential equations (SDEs). In his seminal paper \cite{peng1990}, Peng derived the corresponding necessary condition for the optimal control of SDEs in full generality. His work shows in particular that the necessary condition significantly differs from the one in the deterministic case, if the control domain is non-convex and the control appears in the diffusion term.

In contrast to the deterministic case, in the stochastic case, It\^o's calculus requires the consideration of Taylor expansions up to second order. In order to handle the quadratic terms arising in this expansion, a second order adjoint state must be introduced which, in the finite-dimensional case, solves a matrix-valued backward stochastic differential equation (BSDE). In infinite dimensions, handling the corresponding equation is the main obstacle for the generalization of the stochastic maximum principle for SPDEs. The main result of the present paper now is to give a new characterization of this process in terms of a function-valued backward SPDE. 

More specifically, we consider the following class of control problems of SPDEs: Minimize 
\begin{equation}\label{costfunctional}
J(u) := \mathbb{E} \left [ \int_0^T \int_{\Lambda} l(x^u_t(\lambda),u_t) \mathrm{d}\lambda \mathrm{d}t + \int_{\Lambda} h(x^u_T(\lambda)) \mathrm{d}\lambda \right ]
\end{equation}
subject to
\begin{equation}\label{stateequation}
\begin{cases}
\mathrm{d}x^u_t = [ \Delta x^u_t + b(x^u_t, u_t) ] \mathrm{d}t + \sigma(x^u_t,u_t) \mathrm{d}W_t &\text{on } [0,T]\times L^2(\Lambda)\\
x^u_0=x_0&\text{in }L^2(\Lambda).
\end{cases}
\end{equation}
Here, $\Lambda \subset \mathbb R$ is a bounded interval, $b$ and $\sigma$ are Nemytskii operators, and $l:\mathbb R\times U \to \mathbb{R}$ and $h:\mathbb R \to \mathbb{R}$ are functions of at most quadratic growth in $x\in \mathbb{R}$. The control domain $U$ is not assumed to be convex. Furthermore, $W$ is a cylindrical Wiener process on a Hilbert space $\Xi$. For precise assumptions, see \cref{notation}.

The second order adjoint state naturally takes values in the tensor product of the state space and can be characterized as the formal solution to an operator-valued BSDE. In the case of cost functionals of the above type \cref{costfunctional}, the state space of the associated BSDE is a non-separable Banach space, leading to further technical difficulties for its analysis. 

In the particular case of $L^2(\Lambda)$-valued SPDEs, we can use the identification $L^2(\Lambda) \otimes L^2(\Lambda) \cong L^2(\Lambda^2)$ and reformulate the associated BSDE as an equation on the function space $L^2(\Lambda^2)$, see equation \cref{generalsecondorderadjoint}. This characterization leads to additional structure on the adjoint state and enables us to generalize Peng's maximum principle to controlled semilinear SPDEs with at most quadratically growing costs.    

In this reformulation, the second order adjoint state satisfies a backward SPDE with distributional terminal condition. This type of terminal condition is not covered by the theory of backward SPDEs. Therefore, in the first step we mollify the terminal condition and obtain a solution to the mollified equation by known results. In the second step, we pass to the limit and obtain a solution to the general second order adjoint equation in the space
\begin{equation}
L^2([0,T]\times\Omega; L^2(\Lambda^2)) \cap L^2(\Omega; C([0,T];H^{-1}(\Lambda^2))).
\end{equation}
In subsequent work, we will exploit this characterization to relate the second order adjoint state to the second order viscosity differentials of the associated value function. We also expect that the representation of the second order adjoint state as a function will prove useful in further investigations of the underlying control problem, as well as numerical approximations of the second order adjoint state. 

Let us mention that the generalization to a uniformly elliptic operator with Dirichlet or Neumann boundary conditions is straightforward without any additional assumptions on the remaining coefficients. The necessary modifications are indicated in \Cref{changesoperator1,changesoperator2}. The restriction to one space-dimension can also be relaxed, if one assumes higher space-regularity on the noise coefficient $\sigma$, see \Cref{changesdimension}.

A large part of the existing literature on the generalization of Peng's maximum principle to infinite dimensions analyzes the operator-valued BSDE. This ansatz is pursued by Frankowska and Zhang in \cite{frankowska2020}, by L\"u and Zhang in \cite{lue2014, lue2015, lue2018}, and by Tang and Li in \cite{tang1994}. The drawback of this approach is that for nonlinear state equations one has to impose strong assumptions on the coefficients of the cost functional, excluding in particular quadratic costs. In \cite{guatteri2014}, Guatteri and Tessitore prove the existence and uniqueness of a solution to operator-valued BSDEs in the space of symmetric bounded operators. This class of equations includes in particular the second order adjoint equation for quadratic cost functionals. However, the proof of Peng's maximum principle requires a control of the terms arising in the duality relation as $\varepsilon$ tends to zero. This analysis has not been executed in the existing literature up to our knowledge.

Let us also mention a different approach to characterize the second order adjoint state, which is pursued by Du and Meng in \cite{du2013}, and by Fuhrman, Hu and Tessitore in \cite{fuhrman2012} and \cite{fuhrman2013}. In this approach, the second order adjoint state is characterized as a so-called stochastic bilinear form. In Du and Meng's work, the assumption that the coefficients of the state equation are twice Fr\'echet differentiable is too restrictive to include, for example, general Nemytskii-type operators. On the other hand, Fuhrman, Hu and Tessitore cannot include quadratic costs. Moreover, the second order adjoint state is not characterized as the solution to a BSDE as it is in the case in finite dimensions. For further references concerning the stochastic maximum principle see the survey article \cite{hu2019}. 

The rest of the paper is structured as follows. In \cref{notation}, we introduce some notation and state the assumptions for our control problem. In \cref{variationalinequatlity}, we recall Peng's strategy to derive the variational inequality in the finite-dimensional setting and discuss the modifications which have to be made in the infinite-dimensional case. In \cref{adjointprocesses}, we define the first order adjoint state and the mollified second order adjoint state via Riesz' representation theorem. We show how the formulation of the control problem with Nemytskii operators enables us to introduce the mollified second order adjoint state in the space $L^2(\Lambda^2)$. In \cref{adjointequations}, we prove that the first order adjoint state satisfies the first order adjoint equation. The mollified second order adjoint state is characterized via a mollified second order adjoint equation. In \cref{limit}, we pass to the limit of the mollified second order adjoint state and derive the second order adjoint equation. Finally, in \cref{mainresult}, we state and prove the main result of our work, the stochastic maximum principle for SPDEs.

\section{Notation and Assumptions}\label{notation}
\noindent
Let $\Lambda \subset \mathbb{R}$ be a bounded interval. For $\gamma>0$, let $H^{\gamma}_0(\Lambda) := W^{\gamma,2}_0$ be the fractional Sobolev space of order $\gamma$ with Dirichlet boundary conditions, and let $H^{-\gamma}(\Lambda)$ denote its dual space. We fix a finite time horizon $T>0$, and solve the state equation \cref{stateequation} in the variational setting, i.e. we work on the Gelfand triple
\begin{equation}
H^1_0(\Lambda) \hookrightarrow L^2(\Lambda) \hookrightarrow H^{-1}(\Lambda),
\end{equation}
and realize $\Delta: H^1_0(\Lambda)\to H^{-1}(\Lambda)$ as a continuous operator (for details see \cite{liu2015}).

For real, separable Hilbert spaces $\mathcal X$, $\mathcal Y$, we denote by $L_2(\mathcal X, \mathcal Y)$ the space of Hilbert-Schmidt operators from $\mathcal X$ to $\mathcal Y$, and by $L_1(\mathcal X, \mathcal Y)$ the space of trace-class operators from $\mathcal X$ to $\mathcal Y$. Furthermore, we denote $L_2(\mathcal X) := L_2(\mathcal X, \mathcal X)$ and $L_1(\mathcal X) := L_1(\mathcal X, \mathcal X)$.

We make the following assumptions.
\begin{assumption}
	\begin{enumerate}[start=1,label={(A\arabic*)}]
		\item\label{assumptionwienerprocess} Let $\Xi$ be a separable Hilbert space and let $(W_t)_{t\in [0,T]}$ be a $\Xi$-valued cylindrical Wiener process on a filtered probability space\\ $(\Omega, \mathcal F, (\mathcal F_t)_{t\in [0,T]}, \mathbb{P})$.
		\item\label{assumtpioncontrolspace} Let $U$ be a non-empty subset of a separable Banach space $\mathcal{U}$, and let
		\begin{multline}\label{admissiblecontrols}
		U_{\text{ad}} := \Big \{ u: [0,T]\times\Omega \to U : \; u\; ( \mathcal F_t )_{t\in [0,T]}-\text{adapted and}\\
		\sup_{t\in [0,T]} \mathbb{E} \left [ \| u_t \|_{\mathcal{U}}^k \right ] < \infty, \forall k\in \mathbb{N} \Big \}
		\end{multline}
		be the set of admissible controls. In particular, $U_{\text{ad}}$ is not assumed to be convex.
		\item\label{assumptioncoefficients} Let $b:\mathbb{R} \times U \to \mathbb{R}$, $\sigma: \mathbb{R} \times U \to L_2(\Xi,\mathbb{R})$, $l:\mathbb{R} \times U \to \mathbb{R}$, and $h: \mathbb{R} \to \mathbb{R}$ be continuous in both arguments and twice continuously differentiable with respect to the first argument. We denote this derivative by $b_{x}$, etc. Let $b_{x}$, $b_{xx}$, $\sigma_{x}$, $\sigma_{xx}$, $l_{xx}$, $h_{xx}$ be bounded, and $b$, $\sigma$, $l_{x}$, $h_{x}$ be bounded by $(1+|x| +\|u\|_{\mathcal{U}})$, $x\in \mathbb{R}$, $u\in U$.
		\item Let $x_0 \in L^2(\Lambda)$.
	\end{enumerate}
\end{assumption}

\begin{remark}\label{changesoperator1}
	\begin{enumerate}
		\item All these coefficients give rise to Nemytskii operators on $L^2(\Lambda)$. For example, we have an operator
		\begin{equation}
		\begin{split}
		\sigma: &L^2(\Lambda)\times U \to L_2(\Xi,L^2(\Lambda)),\\
		&(x,u) \mapsto \left ((\xi,\lambda) \mapsto  \sigma(x(\lambda),u)(\xi) \right ).
		\end{split}
		\end{equation}
		Throughout the paper, we are going to use the identification
		\begin{equation}
		\begin{split}
		L^2(\Lambda; L_2(\Xi,\mathbb{R}))&\cong L_2(\Xi,L^2(\Lambda))\\
		q(\lambda)(\xi) &\leftrightarrow q(\xi)(\lambda).
		\end{split}
		\end{equation}
		\item The differential operator $\Delta$ in the state equation \cref{stateequation} can be replaced by the generator $A: \mathcal{D}(A) \subset L^2(\Lambda) \to L^2(\Lambda)$ of the quadratic form
		\begin{equation}
		\int_{\Lambda} a(\lambda) (\partial_{\lambda} x)^2(\lambda) \mathrm{d}\lambda, \quad x\in H^1_0(\Lambda),
		\end{equation}
		for some $a\in L^{\infty}(\Lambda)$ with $a(\lambda) \geq a_0 >0$, that can be formally represented as the second order differential operator in divergence form
		\begin{equation}\label{generaloperator}
		Ax(\lambda) = \partial_{\lambda}(a \partial_{\lambda} x)(\lambda).
		\end{equation}
	\end{enumerate}
\end{remark}

\section{Variational Inequality}\label{variationalinequatlity}
\noindent
We follow the idea of the finite-dimensional case and introduce a so-called spike variation. Let $\bar u$ be an optimal control of the control problem \cref{costfunctional} and \cref{stateequation}, and let $\bar x$ be the associated optimal state. Fix any $v\in U$, $\tau \in (0,T)$ and $\varepsilon >0$, and define
\begin{equation}
u^{\varepsilon}_t := \begin{cases}
v , &\tau\leq t \leq \tau+\varepsilon\\
\bar u_t, &\text{otherwise}.
\end{cases}
\end{equation}
Let $x^{\varepsilon}$ denote the state associated with $u^{\varepsilon}$. Let $y^{\varepsilon}$ denote the solution to the SPDE
\begin{equation}\label{yepsilon}
\begin{cases}
\mathrm{d}y^{\varepsilon}_t = \left [ \Delta y^{\varepsilon}_t + b_{x}(\bar x_t, \bar u_t) y^{\varepsilon}_t + b(\bar x_t, u^{\varepsilon}_t) - b(\bar x_t, \bar u_t) \right ] \mathrm{d}t\\
\qquad\qquad + \left [ \sigma_{x}(\bar x_t, \bar u_t) y^{\varepsilon}_t + \sigma(\bar x_t, u^{\varepsilon}_t) - \sigma(\bar x_t, \bar u_t) \right ] \mathrm{d}W_t\\
y^{\varepsilon}_0 = 0,
\end{cases}
\end{equation}
and let $z^{\varepsilon}$ denote the solution to the SPDE
\begin{equation}\label{zepsilon}
\begin{cases}
\mathrm{d}z^{\varepsilon}_t = \left [\Delta z^{\varepsilon}_t + b_{x}(\bar x_t, \bar u_t) z^{\varepsilon}_t + \frac12 b_{xx}(\bar x_t, \bar u_t)y^{\varepsilon}_t y^{\varepsilon}_t + (b_{x}(\bar x_t, u^{\varepsilon}_t) - b_{x}(\bar x_t, \bar u_t)) y^{\varepsilon}_t \right ] \mathrm{d}t\\
\qquad\qquad+ \left [ \sigma_{x}(\bar x_t, \bar u_t) z^{\varepsilon}_t + \frac12 \sigma_{xx}(\bar x_t, \bar u_t)y^{\varepsilon}_t y^{\varepsilon}_t + (\sigma_{x}(\bar x_t, u^{\varepsilon}_t) - \sigma_{x}(\bar x_t, \bar u_t)) y^{\varepsilon}_t \right ] \mathrm{d}W_t\\
z^{\varepsilon}_0=0.
\end{cases}
\end{equation}
These equations are called first and second order variational equations, respectively.
\begin{lemma}\label{approximation}
	It holds
	\begin{equation}
	\sup_{t\in [0,T]} \mathbb{E} \left [ \left \| x^{\varepsilon}_t - \bar x_t - y^{\varepsilon}_t - z^{\varepsilon}_t \right \|_{L^2(\Lambda)}^2 \right ] \leq o(\varepsilon^2).
	\end{equation}
\end{lemma}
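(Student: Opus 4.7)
The plan is to set $\xi^\varepsilon := x^\varepsilon - \bar x$ and $r^\varepsilon := \xi^\varepsilon - y^\varepsilon - z^\varepsilon$, derive the SPDE satisfied by $r^\varepsilon$ by subtracting \cref{yepsilon,zepsilon} from the equation for $\xi^\varepsilon$, apply It\^o's formula to $\|r^\varepsilon\|_{L^2(\Lambda)}^2$ in the Gelfand triple $H^1_0 \hookrightarrow L^2 \hookrightarrow H^{-1}$, and close the estimate by Gronwall. Using Taylor's formula with integral remainder to expand $b(x^\varepsilon, u^\varepsilon) - b(\bar x, u^\varepsilon)$ (and analogously $\sigma$), a direct rearrangement shows that the non-Laplacian part of the drift of $r^\varepsilon$ equals
\begin{equation*}
b_x(\bar x, u^\varepsilon)\, r^\varepsilon + \bigl(b_x(\bar x, u^\varepsilon) - b_x(\bar x, \bar u)\bigr) z^\varepsilon + B^\varepsilon,
\end{equation*}
where the quadratic Taylor residual decomposes as
\begin{equation*}
B^\varepsilon \;=\; \underbrace{\int_0^1 (1-\theta) \bigl[b_{xx}(\bar x + \theta \xi^\varepsilon, u^\varepsilon) - b_{xx}(\bar x, \bar u)\bigr] d\theta \cdot (\xi^\varepsilon)^2}_{=:\, B^\varepsilon_1} \;+\; \underbrace{\tfrac12 b_{xx}(\bar x, \bar u)(\xi^\varepsilon - y^\varepsilon)(\xi^\varepsilon + y^\varepsilon)}_{=:\, B^\varepsilon_2},
\end{equation*}
with an analogous expression $\Sigma^\varepsilon = \Sigma^\varepsilon_1 + \Sigma^\varepsilon_2$ for the diffusion.

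As a preliminary step I establish, for every integer $k\ge 1$, the moment bounds
\begin{equation*}
\sup_{t\in [0,T]} \e \|\xi^\varepsilon_t\|_{L^{2k}(\Lambda)}^{2k} + \sup_{t\in [0,T]} \e \|y^\varepsilon_t\|_{L^{2k}(\Lambda)}^{2k} = O(\varepsilon^k),
\end{equation*}
\begin{equation*}
\sup_{t\in [0,T]} \e \|z^\varepsilon_t\|_{L^{2k}(\Lambda)}^{2k} + \sup_{t\in [0,T]} \e \|\xi^\varepsilon_t - y^\varepsilon_t\|_{L^{2k}(\Lambda)}^{2k} = O(\varepsilon^{2k}),
\end{equation*}
by applying It\^o's formula to $\|\cdot\|_{L^{2k}}^{2k}$ in the variational framework. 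The Laplacian contributes a non-positive term, the inhomogeneities in \cref{yepsilon,zepsilon} are supported on the spike $[\tau, \tau+\varepsilon]$ and thus generate the claimed powers of $\varepsilon$, and boundedness of $b_x, b_{xx}, \sigma_x, \sigma_{xx}$ together with Gronwall closes the estimates.

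It\^o applied to $\|r^\varepsilon_t\|_{L^2}^2$ then yields, after discarding the non-positive Laplacian term, absorbing $b_x(\bar x, u^\varepsilon) r^\varepsilon$ and $\sigma_x(\bar x, u^\varepsilon) r^\varepsilon$ into Gronwall via boundedness, and controlling the martingale part by Burkholder-Davis-Gundy,
\begin{equation*}
\e \|r^\varepsilon_t\|_{L^2}^2 \,\le\, C \int_0^t \e \|r^\varepsilon_s\|_{L^2}^2 \,ds \;+\; C \int_0^T \bigl(\e \|B^\varepsilon_s\|_{L^2}^2 + \e \|\Sigma^\varepsilon_s\|^2\bigr) ds \;+\; C \varepsilon \sup_{s\in [0,T]} \e \|z^\varepsilon_s\|_{L^2}^2.
\end{equation*}
The last contribution, coming from the cross term $(b_x(\bar x, u^\varepsilon) - b_x(\bar x, \bar u)) z^\varepsilon$ which is supported on the spike, is already $O(\varepsilon \cdot \varepsilon^2) = O(\varepsilon^3) = o(\varepsilon^2)$ by the moment bounds.

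The main obstacle is to control $B^\varepsilon$ and $\Sigma^\varepsilon$ to order $o(\varepsilon^2)$ in $L^2([0,T]\times\Omega; L^2(\Lambda))$. For $B^\varepsilon_2$ (and $\Sigma^\varepsilon_2$), boundedness of $b_{xx}$ combined with Cauchy-Schwarz on $\Lambda$ via the embedding $L^2 \supset L^4 \cdot L^4$ and the $k=2$ moments yields
\begin{equation*}
\e \|B^\varepsilon_2\|_{L^2}^2 \,\le\, C\bigl(\e \|\xi^\varepsilon - y^\varepsilon\|_{L^4}^4\bigr)^{1/2}\bigl(\e \|\xi^\varepsilon + y^\varepsilon\|_{L^4}^4\bigr)^{1/2} \,=\, O(\varepsilon^{2}) \cdot O(\varepsilon) \,=\, O(\varepsilon^3).
\end{equation*}
For $B^\varepsilon_1$ (and $\Sigma^\varepsilon_1$), the coefficient $I^\varepsilon := \int_0^1(1-\theta)[b_{xx}(\bar x + \theta\xi^\varepsilon, u^\varepsilon) - b_{xx}(\bar x, \bar u)]\,d\theta$ is uniformly bounded by $2\|b_{xx}\|_\infty$ and converges to $0$ in $dt \otimes d\p \otimes d\lambda$-measure as $\varepsilon \to 0$ (by continuity of $b_{xx}$, $\xi^\varepsilon \to 0$ in measure, and the shrinking of the spike $\{u^\varepsilon \ne \bar u\}$). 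Cauchy-Schwarz gives
\begin{equation*}
\int_0^T \e \|B^\varepsilon_1\|_{L^2}^2 \,ds \,\le\, \Bigl(\int_0^T \e \|I^\varepsilon_s\|_{L^4(\Lambda)}^4 \,ds\Bigr)^{1/2} \Bigl(\int_0^T \e \|\xi^\varepsilon_s\|_{L^8(\Lambda)}^8 \,ds\Bigr)^{1/2},
\end{equation*}
where the first factor tends to zero by dominated convergence on $\Omega \times [0,T] \times \Lambda$, while the second is $O(\varepsilon^2)$ by the $k=4$ moment bound, delivering the desired $o(\varepsilon^2)$. Inserting these estimates into the integral inequality and applying Gronwall completes the proof.
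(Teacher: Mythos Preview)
Your proposal is correct and follows essentially the same approach as the paper, which defers entirely to Peng's finite-dimensional argument \cite{peng1990} together with the a priori bounds of \cref{bounds}. Your write-up is in fact more explicit than the paper's: you correctly observe that controlling Nemytskii products like $b_{xx}(\bar x,\bar u)(\xi^\varepsilon)^2$ in $L^2(\Lambda)$ requires $L^{2k}(\Lambda)$ spatial moments (not merely the $\|\cdot\|_{L^2(\Lambda)}^{2k}$ moments stated in \cref{bounds}), and the It\^o formula for $\|\cdot\|_{L^{2k}}^{2k}$ that this entails is of Krylov's $L^p$-type rather than the standard variational one in \cite{liu2015}.
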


Before we prove this Lemma, we need the following Lemma as a preparation.
\begin{lemma}\label{bounds}
	It holds
	\begin{align}
	&\sup_{t\in [0,T]} \mathbb{E} \left [ \left \| y^{\varepsilon}_t \right \|_{L^2(\Lambda)}^{2k} \right ] \leq C\varepsilon^k\\
	&\sup_{t\in [0,T]} \mathbb{E} \left [ \left \| z^{\varepsilon}_t \right \|_{L^2(\Lambda)}^k \right ] \leq C\varepsilon^k,
	\end{align}
	for $k\in \mathbb{N}$.
\end{lemma}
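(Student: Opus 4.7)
The plan is to apply It\^o's formula in the variational Gelfand-triple framework $H^1_0(\Lambda)\hookrightarrow L^2(\Lambda)\hookrightarrow H^{-1}(\Lambda)$ and close both estimates with Gronwall's inequality, exploiting the dissipation supplied by the Laplacian and the localization of the spike variation on the time slice $[\tau,\tau+\varepsilon]$ of length $\varepsilon$.

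For the bound on $y^\varepsilon$, first I would apply It\^o's formula to $\|y^\varepsilon_t\|^{2k}_{L^2(\Lambda)}$. The duality pairing with $\Delta$ produces a non-positive term $-2k\|y^\varepsilon\|^{2k-2}\|y^\varepsilon\|_{H^1_0}^{2}$ that can either be discarded or retained for later use in the second estimate. The terms linear in $y^\varepsilon$ involving $b_{x}(\bar x,\bar u)$ and $\sigma_{x}(\bar x,\bar u)$ are handled by their uniform boundedness, producing the Gronwall-type contribution $\int_0^t \mathbb{E}\|y^\varepsilon\|^{2k}\,ds$. The spike-localized forcing terms $b(\bar x,u^\varepsilon)-b(\bar x,\bar u)$ and $\sigma(\bar x,u^\varepsilon)-\sigma(\bar x,\bar u)$ vanish off $[\tau,\tau+\varepsilon]$; combining the linear-growth bound $|b|,\|\sigma\|_{L_2}\leq C(1+|x|+\|u\|_{\mathcal{U}})$ with the $L^p$-moment bounds on $\bar x$ (from well-posedness of the state equation) and on $u^\varepsilon$ (from the definition of $U_{\text{ad}}$), the drift integral produces $O(\varepsilon^{2k})$ via H\"older, while the It\^o (equivalently BDG) correction of the diffusion produces $O(\varepsilon^{k})$, which is the binding order. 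Gronwall closes the first estimate.

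For the bound on $z^\varepsilon$, I would apply It\^o's formula to $\|z^\varepsilon_t\|^{2k}_{L^2(\Lambda)}$. The terms linear in $z^\varepsilon$ and the spike-localized contributions $(b_{x}(\bar x,u^\varepsilon)-b_{x}(\bar x,\bar u))y^\varepsilon$ together with the analogous $\sigma$-term are treated as in the first step, now invoking the already-established bound on $y^\varepsilon$ to extract the additional $\varepsilon$-powers. The new difficulty is the quadratic-in-$y^\varepsilon$ contributions $\tfrac12 b_{xx}(\bar x,\bar u)(y^\varepsilon)^2$ and $\tfrac12 \sigma_{xx}(\bar x,\bar u)(y^\varepsilon)^2$, which are active on the full interval $[0,T]$. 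Their Nemytskii $L^2(\Lambda)$-norm equals $\|y^\varepsilon\|_{L^4(\Lambda)}^{2}$ up to the uniform constants $\|b_{xx}\|_\infty$, $\|\sigma_{xx}\|_\infty$. To control the resulting moments of $\|y^\varepsilon\|_{L^4(\Lambda)}$, I would exploit the one-dimensionality of $\Lambda$ through the Gagliardo--Nirenberg interpolation $\|y\|_{L^4}^{4}\leq C\|y\|_{L^2}^{3}\|y\|_{H^1_0}$ (equivalently through the embedding $H^1_0(\Lambda)\hookrightarrow L^\infty(\Lambda)$), combined with the space-time $H^1_0$-energy estimate for $y^\varepsilon$ produced by the dissipation retained in the first step and with the $L^{2p}$-moment bounds on $\|y^\varepsilon\|_{L^2}$ already proved. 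Gronwall, together with Jensen's inequality to cover odd exponents, then yields $\mathbb{E}\|z^\varepsilon_t\|^{k}\leq C\varepsilon^{k}$.

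The hard part will be the quadratic-in-$y^\varepsilon$ forcing in the $z^\varepsilon$-equation: unlike the spike contributions, these terms act on all of $[0,T]$, and their $L^2(\Lambda)$-norm depends on $\|y^\varepsilon\|_{L^4(\Lambda)}$ rather than on $\|y^\varepsilon\|_{L^2(\Lambda)}$, so the step-one bounds do not apply directly. This is precisely where the restriction to one space-dimension enters, through the Sobolev embedding, and where the parabolic $H^1_0$-regularity of $y^\varepsilon$ generated by the Laplacian must be recycled. For arbitrarily large $k\in\mathbb{N}$ one must moreover bootstrap higher-order moments of $\|y^\varepsilon\|_{H^1_0}$, which is the most technical ingredient of the argument.
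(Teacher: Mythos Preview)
Your treatment of $y^\varepsilon$ matches the paper's: It\^o's formula in the variational framework, BDG and Young to absorb the martingale, then Gr\"onwall. (The paper applies It\^o to $\|y^\varepsilon\|_{L^2}^2$ and raises the resulting inequality to the $k$-th power, whereas you apply It\^o directly to $\|y^\varepsilon\|^{2k}$; this is only a cosmetic difference.)

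For $z^\varepsilon$ the paper's proof is a single sentence: it says the argument is ``similar'' and that the quadratic terms in the $z^\varepsilon$-equation inherit ``twice the order of convergence'' from the corresponding terms for $y^\varepsilon$. You have correctly identified the point the paper passes over: the Nemytskii terms $b_{xx}(\bar x,\bar u)(y^\varepsilon)^2$ and $\sigma_{xx}(\bar x,\bar u)(y^\varepsilon)^2$ live on all of $[0,T]$ and their $L^2(\Lambda)$-norm is governed by $\|y^\varepsilon\|_{L^4(\Lambda)}^2$, which the first estimate does not deliver directly. Your resolution---Gagliardo--Nirenberg $\|y\|_{L^4}^4\le C\|y\|_{L^2}^3\|y\|_{H^1_0}$ in one space-dimension, combined with the $L^2_tH^1_0$-energy bound obtained by retaining the coercive Laplacian term in step one---is a correct and natural way to close this: combining $\mathbb{E}[\sup_t\|y^\varepsilon_t\|_{L^2}^{2m}]\le C\varepsilon^m$ with $\mathbb{E}\bigl[(\int_0^T\|y^\varepsilon_s\|_{H^1_0}^2\,ds)^m\bigr]\le C\varepsilon^m$ (the latter following from the same Gr\"onwall argument once the dissipative term is kept on the left) indeed yields $\mathbb{E}\bigl[(\int_0^T\|y^\varepsilon_s\|_{L^4}^4\,ds)^k\bigr]\le C\varepsilon^{2k}$ via H\"older. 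So your proposal is not a different route but a more complete execution of the same one; you have filled in precisely the step the paper leaves implicit, and the reliance on one-dimensionality that you flag here is genuine, though the paper does not make it explicit at this stage.
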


\begin{remark}
In \Cref{subsequence} below, we prove higher space-regularity for $y^{\varepsilon}_T$.
\end{remark}

\begin{proof}
Let us begin with the inequalities for $y^{\varepsilon}$. By It\^o's formula for variational solutions to SPDEs (see \cite{liu2015}, Theorem 4.2.5) and elementary estimates, we have
\begin{equation}\label{itoy}
\begin{split}
&\| y^{\varepsilon}_t \|_{L^2(\Lambda)}^2\\
\leq& 2\int_0^t ( \|b_{x}\|_{\infty} + \| \sigma_{x} \|_{\infty}^2 + 1 ) \| y^{\varepsilon}_s \|_{L^2(\Lambda)}^2 \mathrm{d}s\\
&+2 \int_0^t \| b(\bar x_s, u^{\varepsilon}_s) - b(\bar x_s, \bar u_s) \|^2_{L^2(\Lambda)} + \| \sigma(\bar x_s, u^{\varepsilon}_s) - \sigma(\bar x_s, \bar u_s) \|_{L_2(\Xi,L^2(\Lambda))}^2 \mathrm{d}s\\
&+2\int_0^t \langle y^{\varepsilon}_s, \sigma_{x}(\bar x_s, \bar u_s) y^{\varepsilon}_s + \sigma(\bar x_s, u^{\varepsilon}_s) - \sigma(\bar x_s, \bar u_s) \mathrm{d}W_s \rangle_{L^2(\Lambda)}.
\end{split}
\end{equation}
Taking both sides to the power $k\in \mathbb{N}$, and taking the supremum and expectations, we arrive at
\begin{equation}\label{itoy2}
\begin{split}
&\mathbb{E} \left [ \sup_{t\in[0,T]} \| y^{\varepsilon}_t \|_{L^2(\Lambda)}^{2k} \right ]\\
\leq& C \int_0^T ( \|b_{x}\|_{\infty} + \| \sigma_{x} \|_{\infty}^2 + 1 )^k \mathbb{E} \left [ \sup_{r\in[0,s]} \| y^{\varepsilon}_r \|_{L^2(\Lambda)}^{2k} \right ] \mathrm{d}s\\
&+C \mathbb{E} \Bigg [ \Bigg ( \int_0^T \| b(\bar x_s, u^{\varepsilon}_s) - b(\bar x_s, \bar u_s) \|^2_{L^2(\Lambda)}\\
&\qquad \qquad\qquad\qquad + \| \sigma(\bar x_s, u^{\varepsilon}_s) - \sigma(\bar x_s, \bar u_s) \|_{L_2(\Xi,L^2(\Lambda))}^2 \mathrm{d}s \Bigg )^k \Bigg ]\\
&+C \mathbb{E} \left [ \sup_{t\in [0,T]} \left | \int_0^t \langle y^{\varepsilon}_s, \sigma_{x}(\bar x_s, \bar u_s) y^{\varepsilon}_s + \sigma(\bar x_s, u^{\varepsilon}_s) - \sigma(\bar x_s, \bar u_s) \mathrm{d}W_s \rangle_{L^2(\Lambda)} \right |^k \right ].
\end{split}
\end{equation}
Using Burkholder-Davis-Gundy inequality, we obtain
\begin{equation}
\begin{split}
&\mathbb{E} \left [ \sup_{t\in [0,T]} \left | \int_0^t \langle y^{\varepsilon}_s, \sigma_{x}(\bar x_s, \bar u_s) y^{\varepsilon}_s + \sigma(\bar x_s, u^{\varepsilon}_s) - \sigma(\bar x_s, \bar u_s) \mathrm{d}W_s \rangle_{L^2(\Lambda)} \right |^k \right ]\\
\leq &C \mathbb{E} \left [ \left \langle \int_0^{\cdot} \langle y^{\varepsilon}_s, \sigma_{x}(\bar x_s, \bar u_s) y^{\varepsilon}_s + \sigma(\bar x_s, u^{\varepsilon}_s) - \sigma(\bar x_s, \bar u_s) \mathrm{d}W_s \rangle_{L^2(\Lambda)} \right \rangle_T^{\frac{k}{2}} \right ]\\
\leq & C\mathbb{E} \left [ \sup_{t\in [0,T]} \| y^{\varepsilon}_t\|_{L^2(\Lambda)}^k \left ( \int_0^T \| \sigma_{x}(\bar x_s, \bar u_s) y^{\varepsilon}_s + \sigma(\bar x_s, u^{\varepsilon}_s) - \sigma(\bar x_s, \bar u_s) \|_{L_2(\Xi,L^2(\Lambda))}^2 \mathrm{d}s \right )^{\frac{k}{2}} \right ]\\
\leq & C\mathbb{E} \Bigg [ \frac{\alpha}{2} \sup_{t\in [0,T]} \| y^{\varepsilon}_t\|_{L^2(\Lambda)}^{2k}\\
&\qquad + \frac{1}{2\alpha} \left ( \int_0^T \| \sigma_{x}(\bar x_s, \bar u_s) y^{\varepsilon}_s + \sigma(\bar x_s, u^{\varepsilon}_s) - \sigma(\bar x_s, \bar u_s) \|_{L_2(\Xi,L^2(\Lambda))}^2 \mathrm{d}s \right )^{k} \Bigg ],
\end{split}
\end{equation}
for every $\alpha >0$. Choosing $\alpha >0$ sufficiently small, we derive from equation \cref{itoy2}
\begin{equation}
\begin{split}
&\mathbb{E} \left [ \sup_{t\in[0,T]} \| y^{\varepsilon}_t \|_{L^2(\Lambda)}^{2k} \right ]\\
\leq& C \int_0^T  \mathbb{E} \left [ \sup_{r\in[0,s]} \| y^{\varepsilon}_r \|_{L^2(\Lambda)}^{2k} \right ] \mathrm{d}s\\
&+C \mathbb{E} \! \left [ \! \left ( \int_0^T \! \| b(\bar x_s, u^{\varepsilon}_s) - b(\bar x_s, \bar u_s) \|^2_{L^2(\Lambda)} + \| \sigma(\bar x_s, u^{\varepsilon}_s) - \sigma(\bar x_s, \bar u_s) \|_{L_2(\Xi,L^2(\Lambda))}^2 \mathrm{d}s\! \right )^k \right ].
\end{split}
\end{equation}
Using the properties of $b$, we have
\begin{equation}
\begin{split}
&\mathbb{E} \left [ \left ( \int_0^T \| b(\bar x_s, u^{\varepsilon}_s) - b(\bar x_s, \bar u_s) \|_{L^2(\Lambda)}^2 \mathrm{d}s \right )^k \right ]\\
\leq & C \varepsilon^k \mathbb{E} \left [ 1+ \sup_{t\in[0,T]} \| \bar x_t \|_{L^2(\Lambda)}^{2k} + \sup_{t\in[0,T]} \| \bar u_t \|_{\mathcal{U}}^{2k} + \| v\|_{\mathcal{U}}^{2k} \right ],
\end{split}
\end{equation}
where the right-hand side is finite using a-priori estimates for variational solutions to SPDEs, see \cite{liu2015}, Theorem 5.1.3.	Analogously, we obtain the same estimate for the term involving $\sigma$. Gr\"onwall's inequality yields the claim for $y^{\varepsilon}$. 

The inequalities for $z^{\varepsilon}$ follow in a similar fashion. The higher order of convergence follows from the fact that the second order expansions in the equation for $z^{\varepsilon}$ satisfy twice the order of the convergence rates of the respective terms in the equation for $y^{\varepsilon}$.
\end{proof}

Now we come to the proof of \cref{approximation}.

\begin{proof}
Using Taylor's theorem for the G\^ateaux derivative (see \cite{zeidler1986}, Section 4.6) 
and the estimates from \cref{bounds}, the proof is exactly the same as in the finite-dimensional case (see \cite{peng1990}).
\end{proof}

With this result, we can derive the following inequality from the fact that $J(\bar u) \leq J(u^{\varepsilon})$. This inequality is the basis for deriving the variational inequality.
\begin{lemma}\label{inequalityp}
It holds
\begin{equation}
\begin{split}
&\mathbb{E} \left [ \int_0^T \int_{\Lambda} l_{x}(\bar x_t(\lambda), \bar u_t) (y^{\varepsilon}_t(\lambda) + z^{\varepsilon}_t(\lambda) ) + \frac12 l_{xx}(\bar x_t(\lambda), \bar u_t ) y^{\varepsilon}_t(\lambda) y^{\varepsilon}_t(\lambda) \mathrm{d}\lambda \mathrm{d}t \right ]\\
+ &\mathbb{E} \left [ \int_{\Lambda} h_{x} (\bar x_T(\lambda)) (y^{\varepsilon}_T(\lambda) + z^{\varepsilon}_T(\lambda)) + \frac12 h_{xx}(\bar x_T(\lambda))y^{\varepsilon}_T(\lambda) y^{\varepsilon}_T(\lambda) \mathrm{d}\lambda \right ]\\
+ &\mathbb{E} \left [ \int_0^T \int_{\Lambda} l(\bar x_t(\lambda), u^{\varepsilon}_t ) - l(\bar x_t(\lambda) , \bar u_t) \mathrm{d}\lambda \mathrm{d}t \right ] \geq o(\varepsilon).
\end{split}
\end{equation}
\end{lemma}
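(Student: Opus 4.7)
The plan is to start from the optimality inequality $J(u^{\varepsilon}) - J(\bar u) \geq 0$, perform a second-order Taylor expansion of $l$ and $h$ around $\bar x$ in the spatial argument, substitute the approximation $x^{\varepsilon} - \bar x \approx y^{\varepsilon} + z^{\varepsilon}$ from \cref{approximation}, and use \cref{bounds} to control all remainder terms. To begin, I would split the running cost increment pointwise as
\begin{equation*}
l(x^{\varepsilon}_t(\lambda), u^{\varepsilon}_t) - l(\bar x_t(\lambda), \bar u_t) = \bigl[l(x^{\varepsilon}_t(\lambda), u^{\varepsilon}_t) - l(\bar x_t(\lambda), u^{\varepsilon}_t)\bigr] + \bigl[l(\bar x_t(\lambda), u^{\varepsilon}_t) - l(\bar x_t(\lambda), \bar u_t)\bigr],
\end{equation*}
and apply Taylor's theorem with integral remainder to the first bracket, producing $l_x(\bar x_t, u^{\varepsilon}_t)(x^{\varepsilon}_t - \bar x_t) + \tfrac12 l_{xx}(\bar x_t, u^{\varepsilon}_t)(x^{\varepsilon}_t - \bar x_t)^2$ plus a remainder of the form $\tfrac12[l_{xx}(\xi_t^\varepsilon, u^{\varepsilon}_t) - l_{xx}(\bar x_t, u^{\varepsilon}_t)](x^{\varepsilon}_t - \bar x_t)^2$ that is $o(\varepsilon)$ after integration thanks to continuity of $l_{xx}$, its boundedness, and the fact that $\|x^{\varepsilon}_t - \bar x_t\|_{L^2(\Lambda)}^2$ is $O(\varepsilon)$ by \cref{approximation,bounds}. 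The terminal term $h(x^\varepsilon_T) - h(\bar x_T)$ is treated identically.

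Next I would substitute $x^{\varepsilon} - \bar x$ by $y^{\varepsilon} + z^{\varepsilon}$ in both the first- and second-order pieces. In the linear term the error
\begin{equation*}
\mathbb{E}\!\int_0^T\!\int_{\Lambda} l_{x}(\bar x_t(\lambda), u^{\varepsilon}_t)\bigl(x^{\varepsilon}_t(\lambda) - \bar x_t(\lambda) - y^{\varepsilon}_t(\lambda) - z^{\varepsilon}_t(\lambda)\bigr)\,\mathrm{d}\lambda\,\mathrm{d}t
\end{equation*}
is $o(\varepsilon)$ by Cauchy--Schwarz, the growth of $l_x$ (which gives an $L^2$-bound independent of $\varepsilon$ via the a-priori bound on $\bar x$ and the $L^k$-bound on $\bar u, v$ from \ref{assumtpioncontrolspace}), and \cref{approximation}. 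In the quadratic term the substitution of $(x^{\varepsilon} - \bar x)^2$ by $(y^{\varepsilon})^2$ introduces a factor $(x^{\varepsilon} - \bar x - y^{\varepsilon})(x^{\varepsilon} - \bar x + y^{\varepsilon})$; boundedness of $l_{xx}$, Cauchy--Schwarz in $\lambda, t, \omega$, and the estimates $\mathbb{E}\|x^{\varepsilon}_t - \bar x_t - y^{\varepsilon}_t\|_{L^2(\Lambda)}^2 \le C\varepsilon^2$ (from \cref{approximation,bounds}) together with $\mathbb{E}\|x^{\varepsilon}_t - \bar x_t + y^{\varepsilon}_t\|_{L^2(\Lambda)}^2 \le C\varepsilon$ render this error $O(\varepsilon^{3/2}) = o(\varepsilon)$.

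Finally I would replace the coefficients $l_x(\bar x_t, u^{\varepsilon}_t)$ and $l_{xx}(\bar x_t, u^{\varepsilon}_t)$ by $l_x(\bar x_t, \bar u_t)$ and $l_{xx}(\bar x_t, \bar u_t)$. These differences vanish outside $[\tau, \tau+\varepsilon]$, so the associated integrals reduce to an expectation of an integrand supported on an interval of length $\varepsilon$. Cauchy--Schwarz against $y^{\varepsilon} + z^{\varepsilon}$ (respectively $(y^{\varepsilon})^2$) combined with the growth of $l_x$, boundedness of $l_{xx}$, and the bounds $\mathbb{E}\|y^{\varepsilon}_t\|_{L^2(\Lambda)}^2 \le C\varepsilon$, $\mathbb{E}\|z^{\varepsilon}_t\|_{L^2(\Lambda)}^2 \le C\varepsilon^2$ yield errors of order $\varepsilon \cdot \varepsilon^{1/2} = o(\varepsilon)$. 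An identical but simpler argument handles the $h$-terms, which have no $u$-dependence. Collecting the expansions and moving the term $\mathbb{E}\!\int_0^T\!\int_\Lambda [l(\bar x_t,u^\varepsilon_t)-l(\bar x_t,\bar u_t)]\,\mathrm{d}\lambda\,\mathrm{d}t$ to the same side yields precisely the claimed inequality.

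The main obstacle is bookkeeping the error terms so as not to lose the decisive factor of $\varepsilon^{1/2}$: because $y^{\varepsilon}$ is only of size $\varepsilon^{1/2}$ in $L^2(\Lambda)$, one cannot afford to discard the support restriction $[\tau, \tau+\varepsilon]$ when exchanging $u^{\varepsilon}$ for $\bar u$ in the coefficients of the first-order term, and one must use the higher-order bound $\mathbb{E}\|y^{\varepsilon}_t\|_{L^2(\Lambda)}^{2k} \le C\varepsilon^k$ from \cref{bounds} (via H\"older with $k=2$) to guarantee that the product of remainders really is $o(\varepsilon)$ rather than merely $O(\varepsilon)$.
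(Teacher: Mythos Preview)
Your proposal is correct and follows precisely the standard Peng argument that the paper invokes: the paper's own proof consists of a single sentence referring the reader to the finite-dimensional case in \cite{peng1990}, and what you outline---optimality inequality, second-order Taylor expansion in the state variable, substitution via \cref{approximation}, and remainder bookkeeping via \cref{bounds}---is exactly that argument transported to the present $L^2(\Lambda)$-setting. Your identification of the delicate point (retaining the support restriction to $[\tau,\tau+\varepsilon]$ when swapping $u^\varepsilon$ for $\bar u$ in the first-order coefficient, so as to gain the extra $\varepsilon^{1/2}$) is the right one.
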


\begin{proof}
Again, similar to the proof in the finite-dimensional case (see \cite{peng1990}).
\end{proof}

\section{Adjoint States}\label{adjointprocesses}
\noindent
In this section, we are going to define the adjoint states using Riesz' representation theorem. We start with the first order adjoint state.

\subsection{First Order Adjoint State}
\noindent
Consider the SPDE
\begin{equation}\label{riesz1p}
\begin{cases}
\mathrm{d}y_t = \left [ \Delta y_t + b_{x}(\bar x_t, \bar u_t) y_t + \varphi_t \right ] \mathrm{d}t + \left [ \sigma_{x}(\bar x_t, \bar u_t ) y_t + \psi_t \right ] \mathrm{d}W_t\\
y_0 = 0,
\end{cases}
\end{equation}
where $(\varphi, \psi) \in L^2([0,T]\times \Omega ; L^2(\Lambda)) \times L^2([0,T]\times\Omega; L_2(\Xi, L^2(\Lambda)))$. Now, we construct a linear functional on the space $ L^2([0,T]\times\Omega; L^2(\Lambda)) \times L^2([0,T]\times\Omega; L_2(\Xi, L^2(\Lambda)))$ as follows
\begin{equation}
\mathcal T_1(\varphi, \psi) := \mathbb{E} \left [ \int_0^T \int_{\Lambda} l_{x}(\bar x_t(\lambda), \bar u_t) y_t(\lambda) \mathrm{d}\lambda \mathrm{d}t + \int_{\Lambda} h_{x}(\bar x_T(\lambda)) y_T(\lambda) \mathrm{d}\lambda \right ],
\end{equation}
where $y$ denotes the solution to equation \cref{riesz1p} associated with $(\varphi, \psi)$. By Riesz' representation theorem, there is a unique pair of adapted processes
\begin{equation}
(p,q)\in L^2([0,T]\times\Omega; L^2(\Lambda)) \times L^2([0,T]\times\Omega; L_2(\Xi, L^2(\Lambda))),
\end{equation}
such that
\begin{equation}\label{firstadjointstateproperty}
\mathcal T_1(\varphi, \psi) = \mathbb{E} \left [ \int_0^T \langle \varphi_t , p_t \rangle_{L^2(\Lambda)} + \langle \psi_t , q_t \rangle_{L_2(\Xi, L^2(\Lambda))} \mathrm{d}t \right ],
\end{equation}
for all $(\varphi, \psi) \in L^2([0,T]\times\Omega ; L^2(\Lambda)) \times L^2([0,T]\times\Omega; L_2(\Xi,L^2(\Lambda)))$. Equation \cref{firstadjointstateproperty} is called the first order adjoint state property. We exploit this property once for the process $y^{\varepsilon}$ given by equation \cref{yepsilon} and once for the process $z^{\varepsilon}$ given by equation \cref{zepsilon}. By choosing $(\varphi,\psi)$ accordingly, we can simplify the inequality from \cref{inequalityp} and obtain
\begin{equation}\label{step1p}
\begin{split}
&\mathbb{E} \left [ \int_0^T \int_{\Lambda} l_{x}(\bar x_t(\lambda), \bar u_t) (y^{\varepsilon}_t(\lambda) + z^{\varepsilon}_t(\lambda) ) + \frac12 l_{xx}(\bar x_t(\lambda), \bar u_t ) y^{\varepsilon}_t(\lambda) y^{\varepsilon}_t(\lambda) \mathrm{d}\lambda \mathrm{d}t \right ]\\
&+ \mathbb{E} \left [ \int_{\Lambda} h_{x} (\bar x_T(\lambda)) (y^{\varepsilon}_T(\lambda) + z^{\varepsilon}_T(\lambda)) + \frac12 h_{xx}(\bar x_T(\lambda)) y^{\varepsilon}_T(\lambda) y^{\varepsilon}_T(\lambda) \mathrm{d}\lambda \right ]\\
&+ \mathbb{E} \left [ \int_0^T \int_{\Lambda} l(\bar x_t(\lambda), u^{\varepsilon}_t ) - l(\bar x_t(\lambda) , \bar u_t) \mathrm{d}\lambda \mathrm{d}t \right ]\\
=& \mathbb{E} \left [ \int_0^T \left \langle b(\bar x_t, u^{\varepsilon}_t) - b(\bar x_t, \bar u_t) , p_t \right \rangle_{L^2(\Lambda)} + \left \langle \sigma(\bar x_t, u^{\varepsilon}_t) - \sigma( \bar x_t, \bar u_t) , q_t \right \rangle_{L_2(\Xi,L^2(\Lambda))} \mathrm{d}t \right ]\\
&+ \mathbb{E} \left [ \int_0^T \frac12 \left \langle b_{xx}( \bar x_t, \bar u_t)y^{\varepsilon}_t y^{\varepsilon}_t , p_t \right \rangle_{L^2(\Lambda)} + \frac12 \left \langle \sigma_{xx}( \bar x_t, \bar u_t)y^{\varepsilon}_t y^{\varepsilon}_t , q_t \right \rangle_{L_2(\Xi,L^2(\Lambda))} \mathrm{d}t \right ]\\
&+ \mathbb{E} \left [ \int_0^T \int_{\Lambda} \frac12 l_{xx}(\bar x_t(\lambda), \bar u_t) y^{\varepsilon}_t(\lambda) y^{\varepsilon}_t(\lambda) \mathrm{d}\lambda + \int_{\Lambda} l(\bar x_t(\lambda), u^{\varepsilon}_t) - l( \bar x_t(\lambda), \bar u_t) \mathrm{d}\lambda \mathrm{d}t \right ]\\
&+ \mathbb{E} \left [ \frac12 \int_{\Lambda} h_{xx}( \bar x_T(\lambda)) y^{\varepsilon}_T(\lambda) y^{\varepsilon}_T(\lambda) \mathrm{d}\lambda \right ] + o(\varepsilon).
\end{split}
\end{equation}
Note that the term
\begin{equation}
\begin{split}
&\mathbb{E} \Bigg [ \int_0^T \left \langle ( b_{x}( \bar x_t , u^{\varepsilon}_t) - b_{x}( \bar x_t, \bar u_t )) y^{\varepsilon}_t, p_t \right \rangle_{L^2(\Lambda)}\\
&\qquad\qquad+ \left \langle (\sigma_{x}( \bar x_t , u^{\varepsilon}_t) - \sigma_{x}(\bar x_t, \bar u_t )) y^{\varepsilon}_t , q_t \right \rangle_{L_2(\Xi,L^2(\Lambda))} \mathrm{d}t \Bigg ]
\end{split}
\end{equation}
is of order $o(\varepsilon)$ and hence can be omitted.

\subsection{Mollified Second Order Adjoint State}

In order to handle the quadratic terms using the same idea as for the linear terms, we have to turn the bilinear forms into linear forms on the tensor product $L^2(\Lambda)\otimes L^2(\Lambda) \cong L^2(\Lambda^2)$ (see \cite{reed1980}, Theorem II.10, for the isomorphism).

\begin{proposition}\label{tensorprocess}
The process $Y^{\varepsilon}_t(\lambda,\mu):= y^{\varepsilon}_t(\lambda) y^{\varepsilon}_t(\mu)$, $\lambda,\mu\in\Lambda$, is in the space
\begin{equation}
L^2([0,T]\times\Omega; H^1_0(\Lambda^2)) \cap L^2(\Omega;C([0,T];L^2(\Lambda^2)))
\end{equation}
and satisfies the equation
\begin{equation}\label{secondvariationalequation}
\begin{cases}
\mathrm{d}Y^{\varepsilon}_t(\lambda,\mu) = [\Delta Y^{\varepsilon}_t(\lambda,\mu) + ( b_{x}(\bar x_t(\lambda),\bar u_t) + b_{x}(\bar x_t(\mu),\bar u_t) ) Y^{\varepsilon}_t(\lambda,\mu)\\
\qquad\qquad\qquad + \langle \sigma_{x}(\bar x_t(\lambda),\bar u_t), \sigma_{x}(\bar x_t(\mu),\bar u_t) \rangle_{L_2(\Xi,\mathbb{R})} Y^{\varepsilon}_t(\lambda,\mu) + \Phi^{\varepsilon}_t(\lambda,\mu) ]\mathrm{d}t\\
\qquad\qquad\qquad + [ ( \sigma_{x}(\bar x_t(\lambda),\bar u_t) + \sigma_{x}(\bar x_t(\mu),\bar u_t) ) Y^{\varepsilon}_t(\lambda,\mu) + \Psi^{\varepsilon}_t(\lambda,\mu) ] \mathrm{d}W_t\\
Y^{\varepsilon}_0 =0,
\end{cases}
\end{equation}
where
\begin{equation}
(\Phi^{\varepsilon},\Psi^{\varepsilon}) \in L^2([0,T]\times\Omega; L^2(\Lambda^2)) \times L^2([0,T]\times \Omega; L_2(\Xi,L^2(\Lambda^2)))
\end{equation}
are given by
\begin{equation}\label{phi}
\begin{split}
\Phi^{\varepsilon}_t(\lambda,\mu) =& y^{\varepsilon}_t(\lambda) (b(\bar x_t(\mu),u^{\varepsilon}_t) - b(\bar x_t(\mu), \bar u_t)) + y^{\varepsilon}_t(\mu) (b(\bar x_t(\lambda),u^{\varepsilon}_t) - b(\bar x_t(\lambda), \bar u_t))\\
&+ \langle \sigma_{x}(\bar x_t(\lambda),\bar u_t) y^{\varepsilon}_t(\lambda), (\sigma(\bar x_t(\mu),u^{\varepsilon}_t) - \sigma(\bar x_t(\mu), \bar u_t))\rangle_{L_2(\Xi,\mathbb{R})}\\
&+ \langle \sigma_{x}(\bar x_t(\mu),\bar u_t) y^{\varepsilon}_t(\mu), (\sigma(\bar x_t(\lambda),u^{\varepsilon}_t) - \sigma(\bar x_t(\lambda), \bar u_t))\rangle_{L_2(\Xi,\mathbb{R})}\\
&+ \langle (\sigma(\bar x_t(\lambda),u^{\varepsilon}_t) - \sigma(\bar x_t(\lambda), \bar u_t)), (\sigma(\bar x_t(\mu),u^{\varepsilon}_t) - \sigma(\bar x_t(\mu), \bar u_t))\rangle_{L_2(\Xi,\mathbb{R})},
\end{split}
\end{equation}
and
\begin{equation}\label{psi}
\Psi^{\varepsilon}_t(\lambda,\mu) = (\sigma(\bar x_t(\lambda),u^{\varepsilon}_t) - \sigma(\bar x_t(\lambda), \bar u_t)) y^{\varepsilon}_t(\mu) + (\sigma(\bar x_t(\mu),u^{\varepsilon}_t) - \sigma(\bar x_t(\mu), \bar u_t)) y^{\varepsilon}_t(\lambda).
\end{equation}
\end{proposition}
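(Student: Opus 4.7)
The plan is to establish the stated regularity of $Y^{\varepsilon}$ first, then to derive \cref{secondvariationalequation} by applying the scalar It\^o product rule to the two real-valued It\^o processes $t \mapsto y^{\varepsilon}_t(\lambda)$ and $t \mapsto y^{\varepsilon}_t(\mu)$ for each fixed $(\lambda,\mu) \in \Lambda^2$ and identifying the resulting drift and diffusion coefficients with those in the claimed equation.

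\textbf{Regularity.} I would first note that $y^{\varepsilon}$, as a variational solution of \cref{yepsilon}, lies in $L^2([0,T]\times\Omega; H^1_0(\Lambda)) \cap L^2(\Omega; C([0,T]; L^2(\Lambda)))$ with all higher moments of $\sup_t \|y^{\varepsilon}_t\|_{L^2(\Lambda)}$ controlled as in \cref{bounds}. Since $\Lambda \subset \mathbb{R}$, the weak partial derivatives of $Y^{\varepsilon}_t(\lambda,\mu) = y^{\varepsilon}_t(\lambda) y^{\varepsilon}_t(\mu)$ are $(\partial_{\lambda} y^{\varepsilon}_t(\lambda)) y^{\varepsilon}_t(\mu)$ and $y^{\varepsilon}_t(\lambda) (\partial_{\mu} y^{\varepsilon}_t(\mu))$, and the Dirichlet boundary condition of $y^{\varepsilon}_t$ on $\partial \Lambda$ transfers to the analogous condition for $Y^{\varepsilon}_t$ on $\partial(\Lambda^2)$. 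Consequently $\|Y^{\varepsilon}_t\|_{H^1_0(\Lambda^2)}^2 \leq 2 \|y^{\varepsilon}_t\|_{L^2(\Lambda)}^2 \|y^{\varepsilon}_t\|_{H^1_0(\Lambda)}^2$, and H\"older's inequality combined with the higher-moment bounds of \cref{bounds} gives $Y^{\varepsilon} \in L^2([0,T]\times\Omega; H^1_0(\Lambda^2))$. The identity $\|Y^{\varepsilon}_t\|_{L^2(\Lambda^2)} = \|y^{\varepsilon}_t\|_{L^2(\Lambda)}^2$ and the path continuity of $y^{\varepsilon}$ in $L^2(\Lambda)$ yield $Y^{\varepsilon} \in L^2(\Omega; C([0,T]; L^2(\Lambda^2)))$.

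\textbf{Deriving the equation.} For the SPDE itself, I would apply the scalar It\^o product rule, i.e.\ $\mathrm{d}(y^{\varepsilon}_t(\lambda) y^{\varepsilon}_t(\mu)) = y^{\varepsilon}_t(\mu)\, \mathrm{d}y^{\varepsilon}_t(\lambda) + y^{\varepsilon}_t(\lambda)\, \mathrm{d}y^{\varepsilon}_t(\mu) + \mathrm{d}\langle y^{\varepsilon}(\lambda), y^{\varepsilon}(\mu)\rangle_t$. Substituting \cref{yepsilon}, the two one-dimensional Laplacian drifts combine into the full 2D Laplacian $\Delta_{(\lambda,\mu)} Y^{\varepsilon}_t$, the two $b_{x}$-drifts combine into $(b_{x}(\bar x_t(\lambda),\bar u_t) + b_{x}(\bar x_t(\mu),\bar u_t)) Y^{\varepsilon}_t(\lambda,\mu)$, and the two spike-variation drifts in $b$ produce the first two summands of $\Phi^{\varepsilon}$ in \cref{phi}. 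The cross-variation, computed via the identification $L^2(\Lambda; L_2(\Xi,\mathbb{R})) \cong L_2(\Xi, L^2(\Lambda))$ from \cref{changesoperator1}, expands the $L_2(\Xi,\mathbb{R})$-inner product of the two diffusion coefficients into four contributions: the $(\sigma_{x},\sigma_{x})$-term gives the third drift in \cref{secondvariationalequation}, while the remaining three give the last three summands of $\Phi^{\varepsilon}$ in \cref{phi}. The sum of the two It\^o differentials yields the diffusion $(\sigma_{x}(\bar x_t(\lambda),\bar u_t) + \sigma_{x}(\bar x_t(\mu),\bar u_t)) Y^{\varepsilon}_t(\lambda,\mu) + \Psi^{\varepsilon}_t(\lambda,\mu)$ with $\Psi^{\varepsilon}$ as in \cref{psi}. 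That $(\Phi^{\varepsilon},\Psi^{\varepsilon})$ lies in $L^2([0,T]\times\Omega; L^2(\Lambda^2)) \times L^2([0,T]\times\Omega; L_2(\Xi,L^2(\Lambda^2)))$ follows from the linear growth of $b$ and $\sigma$ in \cref{assumptioncoefficients} combined with the moment estimates of \cref{bounds}.

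\textbf{Rigorous justification and main obstacle.} To turn the pointwise It\^o computation into a functional identity in the Gelfand triple $H^1_0(\Lambda^2) \subset L^2(\Lambda^2) \subset H^{-1}(\Lambda^2)$, I would work with a spectral Galerkin approximation $y^{\varepsilon,n}$ of $y^{\varepsilon}$ in the Dirichlet eigenbasis of $\Delta$ on $\Lambda$. The product $Y^{\varepsilon,n}_t = y^{\varepsilon,n}_t \otimes y^{\varepsilon,n}_t$ is then a finite bilinear combination of scalar It\^o processes, so the classical product formula applies without infinite-dimensional subtleties and produces the Galerkin analogue of \cref{secondvariationalequation}. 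Passing to the limit $n \to \infty$, using the strong convergence of $y^{\varepsilon,n}$ in $L^2([0,T]\times\Omega; H^1_0(\Lambda)) \cap L^2(\Omega; C([0,T]; L^2(\Lambda)))$ together with the higher-moment estimates of \cref{bounds}, then yields \cref{secondvariationalequation}. The main obstacle I expect is the identification of the limits of the tensor-type coupling terms such as $y^{\varepsilon,n}_t(\lambda)[\sigma(\bar x_t(\mu), u^{\varepsilon}_t) - \sigma(\bar x_t(\mu), \bar u_t)]$ as functional objects in $L^2(\Lambda^2)$; linear growth of $\sigma$ and strong $L^2$-convergence of $y^{\varepsilon,n}$ suffice, but must be combined carefully with Fubini's theorem to match the pointwise limits with the claimed elements of $\Phi^{\varepsilon}$ and $\Psi^{\varepsilon}$.
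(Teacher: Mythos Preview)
Your proposal is correct and follows the same overall idea as the paper: apply the scalar It\^o product rule and identify drift and diffusion terms. The difference lies in how the computation is made rigorous. The paper tests against elementary tensors $f_1\otimes f_2$ with $f_1,f_2\in H^1_0(\Lambda)$, so that the two factors $\langle y^{\varepsilon}_t,f_1\rangle_{L^2(\Lambda)}$ and $\langle y^{\varepsilon}_t,f_2\rangle_{L^2(\Lambda)}$ are genuine real-valued semimartingales to which the classical product rule applies directly; a density argument for $\operatorname{span}\{f_1\otimes f_2\}$ then yields the equation in $H^{-1}(\Lambda^2)$. Your route via spectral Galerkin approximation and passage to the limit also works, but is heavier: you must verify strong convergence of $y^{\varepsilon,n}$ in the relevant norms and control all the tensor coupling terms in the limit, whereas the paper's test-function approach sidesteps this entirely. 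Both methods buy the same result; the paper's is shorter because the weak formulation of the variational solution already hands you real-valued It\^o processes without any approximation step.
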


\begin{proof}
The regularity of $Y^{\varepsilon}$ follows from the regularity of $y^{\varepsilon}$. Applying It\^o's product rule for real-valued semimartingales to
\begin{equation}
\langle Y^{\varepsilon}_t, f_1\otimes f_2 \rangle_{L^2(\Lambda^2)}  = \langle y^{\varepsilon}_t , f_1 \rangle_{L^2(\Lambda)} \langle y^{\varepsilon}_t , f_2 \rangle_{L^2(\Lambda)},
\end{equation}
$f_1,f_2\in H^1_0(\Lambda)$, and using a density argument yields 
\begin{equation}
\begin{split}
\mathrm{d}Y^{\varepsilon}_t(\lambda,\mu)=& y^{\varepsilon}_t(\lambda) \mathrm{d}y^{\varepsilon}_t(\mu) + y^{\varepsilon}_t(\mu) \mathrm{d}y^{\varepsilon}_t(\lambda) + \mathrm{d} \langle y^{\varepsilon}(\lambda), y^{\varepsilon}(\mu) \rangle_t\\
=& y^{\varepsilon}_t(\lambda) (\Delta_{\mu} y^{\varepsilon}_t(\mu) + b_{x}(\bar x_t(\mu),\bar u_t) y^{\varepsilon}_t(\mu) + b(\bar x_t(\mu),u^{\varepsilon}_t) - b(\bar x_t(\mu), \bar u_t) ) \mathrm{d}t\\
&+ y^{\varepsilon}_t(\mu) (\Delta_{\lambda} y^{\varepsilon}_t(\lambda) + b_{x}(\bar x_t(\lambda),\bar u_t) y^{\varepsilon}_t(\lambda) + b(\bar x_t(\lambda),u^{\varepsilon}_t) - b(\bar x_t(\lambda), \bar u_t)) \mathrm{d}t\\
&+ \langle (\sigma_{x}(\bar x_t(\lambda),\bar u_t) y^{\varepsilon}_t(\lambda) + \sigma(\bar x_t(\lambda),u^{\varepsilon}_t) - \sigma(\bar x_t(\lambda), \bar u_t)),\\
&\qquad\qquad ( \sigma_{x}(\bar x_t(\mu),\bar u_t) y^{\varepsilon}_t(\mu) + \sigma(\bar x_t(\mu),u^{\varepsilon}_t) - \sigma(\bar x_t(\mu), \bar u_t)) \rangle_{L_2(\Xi,\mathbb{R})} \mathrm{d}t\\
&+ y^{\varepsilon}_t(\lambda) ( \sigma_{x}(\bar x_t(\mu),\bar u_t) y^{\varepsilon}_t(\mu) + \sigma(\bar x_t(\mu),u^{\varepsilon}_t) - \sigma(\bar x_t(\mu), \bar u_t) ) \mathrm{d}W_t\\
&+ y^{\varepsilon}_t(\mu) ( \sigma_{x}(\bar x_t(\lambda),\bar u_t) y^{\varepsilon}_t(\lambda) + \sigma(\bar x_t(\lambda),u^{\varepsilon}_t) - \sigma(\bar x_t(\lambda), \bar u_t) ) \mathrm{d}W_t 
\end{split}
\end{equation}
in $L^2 (\Lambda^2)$. Note that
\begin{equation}
y^{\varepsilon}_t(\lambda) \Delta_{\mu} y^{\varepsilon}_t(\mu) + y^{\varepsilon}_t(\mu) \Delta_{\lambda} y^{\varepsilon}_t(\lambda) = \Delta Y^{\varepsilon}_t(\lambda,\mu).
\end{equation}
Combining the remaining terms in a similar fashion, we end up with
\begin{equation}
\begin{split}
\mathrm{d}Y^{\varepsilon}_t(\lambda,\mu) =& [\Delta Y^{\varepsilon}_t(\lambda,\mu) + ( b_{x}(\bar x_t(\lambda),\bar u_t) + b_{x}(\bar x_t(\mu),\bar u_t) ) Y^{\varepsilon}_t(\lambda,\mu)\\
& + \langle \sigma_{x}(\bar x_t(\lambda),\bar u_t), \sigma_{x}(\bar x_t(\mu),\bar u_t) \rangle_{L_2(\Xi,\mathbb{R})} Y^{\varepsilon}_t(\lambda,\mu) + \Phi^{\varepsilon}_t(\lambda,\mu) ]\mathrm{d}t\\
&+ [ ( \sigma_{x}(\bar x_t(\lambda),\bar u_t) + \sigma_{x}(\bar x_t(\mu),\bar u_t) ) Y^{\varepsilon}_t(\lambda,\mu) + \Psi^{\varepsilon}_t(\lambda,\mu) ] \mathrm{d}W_t,
\end{split}
\end{equation}
for $\Phi^{\varepsilon}$ and $\Psi^{\varepsilon}$ as stated in the Proposition. This concludes the proof.
\end{proof}

We can now rewrite the quadratic terms in  $y^{\varepsilon}$ in the variational inequality into linear terms in $Y^{\varepsilon}$, evaluated on the diagonal in $\Lambda^2$.

\begin{proposition}\label{delta}
It holds
\begin{equation}\label{eq1}
\begin{split}
&\mathbb{E} \left [ \int_0^T \left \langle b_{xx}( \bar x_t, \bar u_t)y^{\varepsilon}_t y^{\varepsilon}_t , p_t \right \rangle_{L^2(\Lambda)} + \left \langle \sigma_{xx}( \bar x_t, \bar u_t)y^{\varepsilon}_t y^{\varepsilon}_t , q_t \right \rangle_{L_2(\Xi,L^2(\Lambda))} \mathrm{d}t \right ]\\
&+ \mathbb{E} \left [ \int_0^T \int_{\Lambda} l_{xx}(\bar x_t, \bar u_t)y^{\varepsilon}_ty^{\varepsilon}_t \mathrm{d}\lambda \mathrm{d}t \right ]\\
=& \mathbb{E} \left [ \int_0^T \int_{\Lambda} \left ( b_{xx}(\bar x_t(\lambda), \bar u_t) p_t(\lambda) + \langle \sigma_{xx}(\bar x_t(\lambda), \bar u_t), q_t(\lambda) \rangle_{L_2(\Xi,\mathbb{R})} \right ) \delta(Y^{\varepsilon}_t)(\lambda) \mathrm{d}\lambda \mathrm{d}t \right ] \\
& +\mathbb{E} \left [ \int_0^T \int_{\Lambda} l_{xx}(\bar x_t(\lambda), \bar u_t) \delta(Y^{\varepsilon}_t)(\lambda) \mathrm{d}\lambda \mathrm{d}t \right ],
\end{split}
\end{equation}
where $\delta:H^1_0(\Lambda^2) \to L^2(\Lambda)$ is defined by $\delta(w)(\lambda) := w(\lambda,\lambda)$.
\end{proposition}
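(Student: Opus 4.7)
The plan is to observe that the quadratic quantity $y^\varepsilon_t(\lambda)^2$ is precisely the diagonal trace of the tensorized process $Y^\varepsilon_t(\lambda,\mu) = y^\varepsilon_t(\lambda) y^\varepsilon_t(\mu)$. Once $\delta$ is established as a bounded linear operator $H^1_0(\Lambda^2) \to L^2(\Lambda)$ taking $Y^\varepsilon_t$ to $(y^\varepsilon_t)^2$ almost everywhere, each of the three scalar products on the left-hand side of \cref{eq1} rewrites pointwise in $\lambda$; for instance
\[
\langle b_{xx}(\bar x_t,\bar u_t) y^\varepsilon_t y^\varepsilon_t, p_t \rangle_{L^2(\Lambda)} = \int_\Lambda b_{xx}(\bar x_t(\lambda),\bar u_t) p_t(\lambda) \delta(Y^\varepsilon_t)(\lambda) \mathrm{d}\lambda,
\]
and the $\sigma_{xx}$-term is treated analogously after using the identification $L^2(\Lambda; L_2(\Xi,\mathbb R)) \cong L_2(\Xi, L^2(\Lambda))$ recorded in \cref{changesoperator1}; the $l_{xx}$-term is direct. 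Summing the three identities, integrating in $t$, and taking expectations yields the claimed equality.

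The technical heart of the argument is therefore the rigorous construction of $\delta$ as a bounded linear operator. Since $\Lambda$ is a bounded interval, $\Lambda^2$ is a bounded Lipschitz domain in $\mathbb R^2$, and the diagonal $D := \{(\lambda,\lambda) : \lambda\in\Lambda\}$ is a smooth one-dimensional submanifold of $\Lambda^2$. After an orthogonal rotation by $\pi/4$ bringing $D$ onto a coordinate axis, and extending a function in $H^1_0(\Lambda^2)$ by zero to an element of $H^1(\mathbb R^2)$, the classical Sobolev trace theorem supplies a bounded map $H^1(\Lambda^2) \to H^{1/2}(D) \hookrightarrow L^2(D)$. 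Identifying $L^2(D)$ with $L^2(\Lambda)$ via the natural parametrization $\lambda \mapsto (\lambda,\lambda)$ then provides $\delta$.

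It remains to verify $\delta(Y^\varepsilon_t)(\lambda) = y^\varepsilon_t(\lambda)^2$ almost everywhere in $\lambda$. For smooth compactly supported $y$, this is immediate from the pointwise definition of the tensor product $(y\otimes y)(\lambda,\mu)=y(\lambda)y(\mu)$. For the general case, I will use that $y^\varepsilon_t \in H^1_0(\Lambda)$ almost surely (which is precisely what underlies the $H^1_0(\Lambda^2)$-regularity of $Y^\varepsilon$ asserted in \cref{tensorprocess}), approximate $y^\varepsilon_t$ in $H^1_0(\Lambda)$ by a sequence $(y_n) \subset C_c^\infty(\Lambda)$, note that $y_n\otimes y_n \to y^\varepsilon_t \otimes y^\varepsilon_t = Y^\varepsilon_t$ in $H^1_0(\Lambda^2)$, and pass to the limit using continuity of $\delta$ on one side and $L^2(\Lambda)$-convergence of $y_n^2$ to $(y^\varepsilon_t)^2$ on the other (the latter follows from the one-dimensional Sobolev embedding $H^1_0(\Lambda)\hookrightarrow C(\bar\Lambda)$, which makes squaring continuous $H^1_0(\Lambda) \to L^2(\Lambda)$). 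The main obstacle I anticipate is organizing this trace identification cleanly enough that it can be reused in \cref{adjointequations} and \cref{limit} without interruption; once it is in place, the remainder of the proof of \cref{delta} reduces to the pointwise substitution sketched above together with Fubini.
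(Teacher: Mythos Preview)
Your proposal is correct and follows essentially the same route as the paper: both identify $\delta(Y^\varepsilon_t)(\lambda)=y^\varepsilon_t(\lambda)^2$ and then rewrite each of the three terms pointwise, with the $\sigma_{xx}$-term handled via the identification $L_2(\Xi,L^2(\Lambda))\cong L^2(\Lambda;L_2(\Xi,\mathbb R))$ (the paper does this concretely by expanding in an orthonormal basis $(\xi_k)$ of $\Xi$). The paper's proof is considerably terser---it takes the identity $\delta(Y^\varepsilon_t)=(y^\varepsilon_t)^2$ as immediate from the definition of $Y^\varepsilon_t$ and postpones the continuity of $\delta$ (via the trace theorem) to the discussion \emph{after} the proposition, whereas you fold that analytic justification into the proof itself; this is a harmless organizational difference.
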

\begin{proof}
Let $(\xi_k)_{k\in \mathbb{N}}$ be an orthonormal basis of $\Xi$. We have
\begin{equation}
\begin{split}
\left \langle \sigma_{xx}( \bar x_t, \bar u_t)y^{\varepsilon}_t y^{\varepsilon}_t , q_t \right \rangle_{L_2(\Xi,L^2(\Lambda))} &= \sum_{k=1}^{\infty}  \left \langle \sigma_{xx}(\bar x_t,\bar u_t)(\xi_k) y^{\varepsilon}_t y^{\varepsilon}_t , q_t(\xi_k) \right \rangle_{L^2(\Lambda)}\\
&= \sum_{k=1}^{\infty} \int_{\Lambda} \sigma_{xx}(\bar x_t(\lambda),\bar u_t)(\xi_k) y^{\varepsilon}_t(\lambda) y^{\varepsilon}_t(\lambda) q_t(\xi_k)(\lambda) \mathrm{d} \lambda\\
&= \int_{\Lambda} \delta(Y^{\varepsilon}_t)(\lambda) \langle \sigma_{xx}(\bar x_t(\lambda),\bar u_t), q_t(\lambda) \rangle_{L_2(\Xi,\mathbb{R})} \mathrm{d}\lambda.
\end{split}
\end{equation}
A similar calculation shows the claim for the remaining terms.
\end{proof}
The operator $\delta: H^1_0(\Lambda^2) \to L^2(\Lambda)$, $\Lambda\subset \mathbb{R}$, is continuous due to the trace theorem (see \cite{adams2003}, Section 7.38). Since
\begin{equation}
Y^{\varepsilon} \in L^2([0,T]\times\Omega; H^1_0(\Lambda^2)),
\end{equation}
the right-hand side of \cref{eq1} is linear and bounded in $Y^{\varepsilon}$. However, the spatial regularity of the solution evaluated at the terminal time $T$ is not sufficient for
\begin{equation}
\mathbb{E} \left [ \int_{\Lambda} h_{xx}(\bar x_T(\lambda)) Y^{\varepsilon}_T(\lambda,\lambda) \mathrm{d}\lambda \right ]
\end{equation}
to be continuous in $Y^{\varepsilon}_T$. In order to obtain a continuous operator in $Y^{\varepsilon}_T$, we have to mollify the terminal condition. Using the heat kernel, we have
\begin{equation}
\begin{split}
&\mathbb{E} \left [ \int_{\Lambda} h_{xx}(\bar x_T(\lambda)) y^{\varepsilon}_T(\lambda) y^{\varepsilon}_T(\lambda) \mathrm{d}\lambda \right ]\\
=& \lim_{\eta \to 0} \mathbb{E}\! \left [ \int_{\Lambda^2} \!\frac 12 \left( h_{xx}(\bar x_T(\lambda)) + h_{xx}(\bar x_T(\mu))\right) y^{\varepsilon}_T(\lambda)  y^{\varepsilon}_T(\mu) \frac{1}{\sqrt{4\pi \eta}} \exp \left ( - \frac{|\lambda-\mu|^2}{4\eta} \right )\mathrm{d}\mu \mathrm{d}\lambda \right ]\\
=& \lim_{\eta \to 0} \mathbb{E}\! \left [
\int_{\Lambda^2} \!
\frac 12 \left( h_{xx}(\bar x_T(\lambda)) + h_{xx}(\bar x_T(\mu))\right)
\frac{1}{\sqrt{4\pi \eta}} \exp \left ( - \frac{|\lambda-\mu|^2}{4\eta} \right ) Y^{\varepsilon}_T(\lambda,\mu) \mathrm{d}\lambda \mathrm{d}\mu \right ].
\end{split}
\end{equation}
We denote
\begin{equation}\label{meta}
h^{\eta}_{xx}(\lambda,\mu) := 
\frac 12 \left( h_{xx}(\bar x_T(\lambda)) + h_{xx}(\bar x_T(\mu))\right)
\frac{1}{\sqrt{4\pi \eta}} \exp \left ( - \frac{|\lambda-\mu|^2}{4\eta} \right ) \in L^2(\Lambda^2).
\end{equation}
With this mollification and \cref{delta}, we construct another bounded, linear functional via
\begin{equation}
\begin{split}
&\mathcal T^{\eta}_2(\Phi,\Psi) \\
:=& \mathbb{E} \left [ \int_0^T \int_{\Lambda} \left ( b_{xx}(\bar x_t(\lambda), \bar u_t) p_t(\lambda) + \langle \sigma_{xx}(\bar x_t(\lambda), \bar u_t), q_t(\lambda) \rangle_{L_2(\Xi,\mathbb{R})} \right ) \delta(Y_t)(\lambda) \mathrm{d}\lambda \mathrm{d}t \right ] \\
& +\mathbb{E} \left [ \int_0^T \int_{\Lambda} l_{xx}(\bar x_t(\lambda), \bar u_t) \delta(Y_t)(\lambda) \mathrm{d}\lambda \mathrm{d}t + \int_{\Lambda^2} h^{\eta}_{xx}(\lambda,\mu) Y_T(\lambda,\mu) \mathrm{d}\lambda \mathrm{d}\mu \right ],
\end{split}
\end{equation}
where $Y$ denotes the solution to equation \cref{secondvariationalequation} with $(\Phi^{\varepsilon},\Psi^{\varepsilon})$ replaced by $(\Phi,\Psi)$. By Riesz' representation theorem, there exists a pair 
\begin{equation}
(P^{\eta},Q^{\eta}) \in L^2([0,T]\times\Omega; L^2(\Lambda^2)) \times L^2([0,T]\times\Omega; L_2(\Xi,L^2(\Lambda^2)))
\end{equation}
such that
\begin{equation}\label{secondadjointstateproperty}
\mathcal T^{\eta}_2(\Phi, \Psi) = \mathbb{E} \left [ \int_0^T \langle P^{\eta}_t, \Phi_t \rangle_{L^2(\Lambda^2)} + \langle Q^{\eta}_t, \Psi_t \rangle_{L_2(\Xi,L^2(\Lambda^2))} \mathrm{d}t \right ],
\end{equation}
for all $(\Phi,\Psi) \in L^2([0,T]\times\Omega; L^2(\Lambda^2)) \times L^2([0,T]\times\Omega; L_2(\Xi,L^2(\Lambda^2)))$. This property is called the mollified second order adjoint state property. Choosing $\Phi = \Phi^{\varepsilon}$ and $\Psi = \Psi^{\varepsilon}$ as given by equations \cref{phi} and \cref{psi}, respectively, we can rewrite \cref{step1p} as
\begin{equation}\label{variational1}
\begin{split}
&\mathbb{E} \left [ \int_0^T \left \langle b(\bar x_t, u^{\varepsilon}_t) - b(\bar x_t, \bar u_t) , p_t \right \rangle_{L^2(\Lambda)} + \left \langle \sigma(\bar x_t, u^{\varepsilon}_t) - \sigma( \bar x_t, \bar u_t) , q_t \right \rangle_{L_2(\Xi,L^2(\Lambda))} \mathrm{d}t \right ]\\
&+ \frac12 \mathbb{E} \left [ \int_0^T \langle P^{\eta}_t, \Phi^{\varepsilon}_t \rangle_{L^2(\Lambda^2)} + \langle Q^{\eta}_t, \Psi^{\varepsilon}_t \rangle_{L_2(\Xi,L^2(\Lambda^2))} \mathrm{d}t \right ]\\
&+ \mathbb{E} \left [ \int_0^T \int_{\Lambda} l(\bar x_t(\lambda), u^{\varepsilon}_t) - l( \bar x_t(\lambda), \bar u_t) \mathrm{d}\lambda \mathrm{d}t \right ] \\
&+\frac12 \mathbb{E} \left [ \int_{\Lambda} h_{xx}( \bar x_T(\lambda)) y^{\varepsilon}_T(\lambda) y^{\varepsilon}_T(\lambda) \mathrm{d}\lambda - \int_{\Lambda^2} h^{\eta}_{xx}(\lambda,\mu) Y^{\varepsilon}_T(\lambda,\mu) \mathrm{d}\lambda \mathrm{d}\mu \right ] \geq o(\varepsilon).
\end{split}
\end{equation}

\begin{remark}\label{changesdimension}
The restriction to one space-dimension goes back to the required continuity of the operator $\delta$ defined in \cref{delta}. For two-dimensional $\Lambda \subset \mathbb{R}^2$, $\delta$ maps from $H^1_0(\Lambda^2)$ to $L^2(\Lambda)$, which means that we lose two space-dimensions and therefore lose the continuity of $\delta$. However, continuity can be restored if we have the space regularity $H^{1+\epsilon}_0(\Lambda^2)$, $\epsilon >0$, see \cite{adams2003}, Section 7.38. This can be achieved by assuming higher space-regularity on the noise coefficient $\sigma$.
\end{remark}

\section{Adjoint Equations}\label{adjointequations}
\noindent
In this section, we are going to deduce equations for the adjoint states $(p,q)$ and $(P^{\eta},Q^{\eta})$, respectively.
\subsection{First Order Adjoint Equation}

We introduce the following first order adjoint equation
\begin{equation}
\begin{cases}
\mathrm{d}p_t = -\left [ \Delta p_t + b_{x}(\bar x_t,\bar u_t) p_t + \langle \sigma_{x}(\bar x_t,\bar u_t), q_t \rangle_{L_2(\Xi,\mathbb{R})} + l_{x}(\bar x_t,\bar u_t) \right ]\mathrm{d}t + q_t \mathrm{d}W_t\\
p_T = h_{x}(\bar x_T).
\end{cases}
\end{equation}
The existence of a unique variational solution $(p,q)$ to this equation, where
\begin{equation}
p \in L^2 ([0,T]\times\Omega;H^1_0(\Lambda))\cap L^2(\Omega;C([0,T];L^2(\Lambda)))
\end{equation}
and
\begin{equation}
q\in L^2([0,T]\times\Omega;L_2(\Xi,L^2(\Lambda))),
\end{equation}
can be found in \cite{bensoussan1983}. In order to verify the adjoint state property \cref{firstadjointstateproperty} we need to apply It\^o's formula to the process $\langle p_t,y_t \rangle_{L^2(\Lambda)}$, where $y$ denotes the solution to equation \cref{riesz1p} associated with $(\varphi,\psi)$. To this end we need It\^o's formula for variational solutions to SPDEs (see page 65 in \cite{pardoux1975} or Section 3 of \cite{krylov2013}) with $V := H^1_0(\Lambda)\times H^1_0(\Lambda)$, $H := L^2(\Lambda)\times L^2(\Lambda)$, and $F:H \to \mathbb R$, $(x,y)\mapsto \langle x,y \rangle_{L^2(\Lambda)}$. This yields
\begin{equation}
\begin{split}
\mathrm{d}\langle p_t, y_t \rangle_{L^2(\Lambda)} =& \langle p_t,\mathrm{d}y_t \rangle_{L^2(\Lambda)} + \langle y_t,\mathrm{d}p_t \rangle_{L^2(\Lambda)} + \mathrm{d}\langle p,y\rangle_t\\
=& \langle p_t, b_{x}(\bar x_t, \bar u_t) y_t + \varphi_t\rangle_{L^2(\Lambda)} \mathrm{d}t\\
&- \langle y_t, b_{x}(\bar x_t, \bar u_t ) p_t + \langle \sigma_{x}(\bar x_t , \bar u_t ), q_t\rangle_{L_2(\Xi,\mathbb{R})} + l_{x}(\bar x_t, \bar u_t) \rangle_{L^2(\Lambda)} \mathrm{d}t\\
&+\langle q_t , \sigma_{x}(\bar x_t, \bar u_t ) y_t + \psi_t \rangle_{L_2(\Xi,L^2(\Lambda))} \mathrm{d}t\\
&+ \langle \left ( \sigma_{x}(\bar x_t, \bar u_t ) y_t + \psi_t \right )^{\ast} p_t , \mathrm{d}W_t \rangle_{L^2(\Lambda)} + \langle q_t^{\ast} y_t, \mathrm{d}W_t \rangle_{L^2(\Lambda)}\\
=& \left [ \langle p_t,\varphi_t \rangle_{L^2(\Lambda)} + \langle q_t,\psi_t \rangle_{L_2(\Xi,L^2(\Lambda))} - \langle y_t,l_{x}(\bar x_t,\bar u_t) \rangle_{L^2(\Lambda)} \right ] \mathrm{d}t\\
&+ \langle \left ( \sigma_{x}(\bar x_t, \bar u_t ) y_t + \psi_t \right )^{\ast} p_t , \mathrm{d}W_t \rangle_{L^2(\Lambda)} + \langle q_t^{\ast} y_t, \mathrm{d}W_t \rangle_{L^2(\Lambda)}. 
\end{split}
\end{equation}
Hence, considering the terminal condition, we obtain
\begin{equation}
\begin{split}
&\mathbb{E} \left [ \langle h_{x}(\bar x_T),y_T \rangle_{L^2(\Lambda)} \right ]\\
=& \mathbb{E} \left [ \int_0^T \langle p_t,\varphi_t \rangle_{L^2(\Lambda)} + \langle q_t,\psi_t \rangle_{L_2(\Xi,L^2(\Lambda))} - \langle y_t,l_{x}(\bar x_t,\bar u_t) \rangle_{L^2(\Lambda)} \mathrm{d}t \right ],
\end{split}
\end{equation}
which is the adjoint state property.

The strategy for the second order adjoint state is the same: First, we introduce the second order adjoint equation and show that a solution to that equation exists; then we apply It\^o's formula and show that the solution satisfies the mollified adjoint state property \cref{secondadjointstateproperty}, which characterizes it as the mollified second order adjoint state.

\subsection{Mollified Second Order Adjoint Equation}

We introduce the mollified second order adjoint equation
\begin{equation}\label{secondorderadjoint}
\begin{cases}
\mathrm{d}P^{\eta}_t(\lambda,\mu) = - [ \Delta P^{\eta}_t(\lambda,\mu) + ( b_{x}(\bar x_t(\lambda),\bar u_t) + b_{x}(\bar x_t(\mu),\bar u_t) ) P^{\eta}_t(\lambda,\mu)\\
\qquad\qquad\qquad + \langle \sigma_{x}(\bar x_t(\lambda), \bar u_t), \sigma_{x}(\bar x_t(\mu), \bar u_t) \rangle_{L_2(\Xi,\mathbb{R})} P^{\eta}_t(\lambda,\mu) \\
\qquad\qquad\qquad + \langle \sigma_{x} (\bar x_t(\lambda),\bar u_t) + \sigma_{x} (\bar x_t(\mu),\bar u_t), Q^{\eta}_t(\lambda,\mu)\rangle_{L_2(\Xi,\mathbb{R})}\\
\qquad\qquad\qquad + \delta^{\ast}( l_{xx}(\bar x_t(\lambda), \bar u_t) ) + \delta^{\ast}( b_{xx}(\bar x_t(\lambda), \bar u_t) p_t(\lambda) )\\
\qquad\qquad\qquad + \delta^{\ast}( \langle \sigma_{xx}(\bar x_t(\lambda),\bar u_t), q_t \rangle_{L_2(\Xi,\mathbb{R})} ) ] \mathrm{d}t + Q^{\eta}_t(\lambda,\mu) \mathrm{d}W_t\\
P^{\eta}_T(\lambda,\mu) = h^{\eta}_{xx}(\lambda,\mu),
\end{cases}
\end{equation}
where $h^{\eta}_{xx}$ is given by equation \cref{meta}, and $\delta^{\ast} : L^2(\Lambda) \to H^{-1}(\Lambda^2)$ is the adjoint of the operator introduced in \cref{delta}, i.e.
\begin{equation}
\langle \delta^{\ast}(f) , w \rangle_{H^{-1}(\Lambda^2)\times H^1_0(\Lambda^2)} := \int_{\Lambda} f(\lambda) \delta(w)(\lambda) \mathrm{d}\lambda = \int_{\Lambda} f(\lambda) w(\lambda,\lambda) \mathrm{d}\lambda,
\end{equation}
for $f\in L^2(\Lambda)$, $w\in H^1_0(\Lambda^2)$.

\begin{proposition}
The mollified second order adjoint equation \cref{secondorderadjoint} has a unique variational solution $(P^{\eta}, Q^{\eta})$, where
\begin{equation}
P^{\eta} \in L^2([0,T]\times\Omega; H^1_0(\Lambda^2)) \cap L^2(\Omega; C([0,T];L^2(\Lambda)))
\end{equation}
and
\begin{equation}
Q^{\eta} \in L^2([0,T]\times\Omega;L_2(\Xi,L^2(\Lambda^2))).
\end{equation}
\end{proposition}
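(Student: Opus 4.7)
The plan is to cast the mollified second order adjoint equation in the variational framework for linear backward SPDEs based on the Gelfand triple
$$H^1_0(\Lambda^2) \hookrightarrow L^2(\Lambda^2) \hookrightarrow H^{-1}(\Lambda^2),$$
with $\Delta$ acting as a continuous operator from $H^1_0(\Lambda^2)$ to $H^{-1}(\Lambda^2)$ after integration by parts with Dirichlet boundary conditions. The terminal condition $h^{\eta}_{xx}$ constructed in \cref{meta} belongs to $L^2(\Lambda^2)$, while the multiplication coefficients $b_{x}(\bar x_t(\lambda))+b_{x}(\bar x_t(\mu))$, $\langle \sigma_{x}(\bar x_t(\lambda)), \sigma_{x}(\bar x_t(\mu))\rangle_{L_2(\Xi,\mathbb{R})}$, and $\sigma_{x}(\bar x_t(\lambda))+\sigma_{x}(\bar x_t(\mu))$ are uniformly bounded on $\Lambda^2\times[0,T]\times\Omega$ by \ref{assumptioncoefficients}, hence define bounded operators on $L^2(\Lambda^2)$ and on $L_2(\Xi,L^2(\Lambda^2))$, respectively. (Here the $L^2(\Lambda)$ appearing in the path-continuity statement has to be read as $L^2(\Lambda^2)$.)

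Second, I would verify that the inhomogeneous terms involving $\delta^{\ast}$ sit in the correct data space. Since $\delta:H^1_0(\Lambda^2)\to L^2(\Lambda)$ is continuous by the trace theorem, its adjoint $\delta^{\ast}:L^2(\Lambda)\to H^{-1}(\Lambda^2)$ is bounded as well. Combined with the boundedness of $l_{xx}$, $b_{xx}$, $\sigma_{xx}$ and the regularity of the first order adjoint pair $(p,q)$ established in the preceding subsection, this yields that
$$\delta^{\ast}(l_{xx}(\bar x_t,\bar u_t))+\delta^{\ast}(b_{xx}(\bar x_t,\bar u_t)p_t)+\delta^{\ast}(\langle \sigma_{xx}(\bar x_t,\bar u_t),q_t\rangle_{L_2(\Xi,\mathbb{R})})$$
belongs to $L^2([0,T]\times\Omega;H^{-1}(\Lambda^2))$, which is precisely the natural data space for the variational formulation of the backward equation.

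With these ingredients in place, the standard coercivity estimate
$$2\,{}_{H^{-1}(\Lambda^2)}\langle \Delta v,v\rangle_{H^1_0(\Lambda^2)}\le -c\,\|v\|_{H^1_0(\Lambda^2)}^{2}+C\,\|v\|_{L^2(\Lambda^2)}^{2},\quad v\in H^1_0(\Lambda^2),$$
together with the bounded multiplicative drift and noise coefficients (which produce only lower order perturbations in the $L^2(\Lambda^2)$ norm that can be absorbed by the coercivity up to a Gr\"onwall factor) puts the equation into the scope of the classical well-posedness theorem for linear backward SPDEs in the variational framework, as invoked for the first order equation via \cite{bensoussan1983}. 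Applying this theorem yields existence and uniqueness of $(P^{\eta},Q^{\eta})$ in the claimed spaces, and path continuity of $P^{\eta}$ in $L^2(\Lambda^2)$ follows from It\^o's formula applied to $\|P^{\eta}_t\|_{L^2(\Lambda^2)}^2$. The only subtle point I expect is that the source truly lives in the dual space $H^{-1}(\Lambda^2)$ rather than in the pivot $L^2(\Lambda^2)$, which is what forces the variational setup and makes the mollification essential at this stage: it guarantees $h^{\eta}_{xx}\in L^2(\Lambda^2)$, without which the terminal value would be merely distributional and the existence result for the unmollified equation would have to be obtained by the separate limiting argument in \cref{limit}.
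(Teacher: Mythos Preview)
Your proposal is correct and follows exactly the paper's approach: invoke the variational well-posedness theory for linear backward SPDEs from \cite{bensoussan1983} on the Gelfand triple $H^1_0(\Lambda^2)\hookrightarrow L^2(\Lambda^2)\hookrightarrow H^{-1}(\Lambda^2)$. The paper's own proof consists of a single sentence to this effect, whereas you have spelled out the verification of the hypotheses (boundedness of the multiplicative coefficients, $\delta^{\ast}$-terms landing in $H^{-1}(\Lambda^2)$, coercivity of $\Delta$, and the role of the mollification in placing the terminal datum in the pivot space); this is a welcome elaboration but not a different argument.
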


\begin{proof}
We apply the result from \cite{bensoussan1983} on the Gelfand triple
\begin{equation}
H^1_0(\Lambda^2) \hookrightarrow L^2(\Lambda^2) \hookrightarrow H^{-1}(\Lambda^2).
\end{equation}
\end{proof}

\subsection{Adjoint State Property for the Mollified Second Order Adjoint State}

Now, we are going to show that the solution to the mollified second order adjoint equation satisfies the mollified adjoint state property \cref{secondadjointstateproperty}. To this end let $Y$ denote the solution to the second variational equation \cref{secondvariationalequation} associated with $(\Phi,\Psi)$, and let $(P^{\eta},Q^{\eta})$ denote the solution to the mollified second order adjoint equation \cref{secondorderadjoint}. We again apply It\^o's formula for variational solutions to SPDEs, this time to the expression
\begin{equation}
\langle P^{\eta}_t(\lambda,\mu),Y_t(\lambda,\mu) \rangle_{L^2(\Lambda^2)}. 
\end{equation}
Choosing $V := H^1_0(\Lambda^2)\times H^1_0(\Lambda^2)$, $H := L^2(\Lambda^2)\times L^2(\Lambda^2)$, and $F:H \to \mathbb R$, $(x,y)\mapsto \langle x,y \rangle_{L^2(\Lambda^2)}$, yields
\begin{equation}
\begin{split}
&\mathrm{d}\langle P^{\eta}_t(\lambda,\mu),Y_t(\lambda,\mu) \rangle_{L^2(\Lambda^2)}\\
=& \langle P^{\eta}_t(\lambda,\mu),\mathrm{d}Y_t(\lambda,\mu) \rangle_{L^2(\Lambda^2)} + \langle Y_t(\lambda,\mu),\mathrm{d}P^{\eta}_t(\lambda,\mu) \rangle_{L^2(\Lambda^2)} + \mathrm{d} \langle P^{\eta}(\lambda,\mu),Y(\lambda,\mu)\rangle_t.
\end{split}
\end{equation}
Plugging in the equations for $P^{\eta}$ and $Y$, respectively, we arrive at
\begin{equation}
\begin{split}
& \mathrm{d}\langle P^{\eta}_t(\lambda,\mu),Y_t(\lambda,\mu) \rangle_{L^2(\Lambda^2)}\\
=&\Big \langle (\Delta Y_t(\lambda,\mu) + ( b_{x}(\bar x_t(\lambda),\bar u_t) + b_{x}(\bar x_t(\mu),\bar u_t) ) Y_t(\lambda,\mu)\\
&\qquad+ \langle \sigma_{x}(\bar x_t(\lambda),\bar u_t), \sigma_{x}(\bar x_t(\mu),\bar u_t)\rangle_{L_2(\Xi,\mathbb{R})} Y_t(\lambda,\mu)\\
&\qquad+ \Phi_t(\lambda,\mu)) , P^{\eta}_t(\lambda,\mu) \Big \rangle_{H^{-1}(\Lambda^2)\times H^1_0(\Lambda^2)} \mathrm{d}t\\
&- \Big \langle \Delta P^{\eta}_t(\lambda,\mu) + ( b_{x}(\bar x_t(\lambda),\bar u_t) + b_{x}(\bar x_t(\mu),\bar u_t) ) P^{\eta}_t(\lambda,\mu)\\
&\qquad+ \langle \sigma_{x}(\bar x_t(\lambda), \bar u_t), \sigma_{x}(\bar x_t(\mu), \bar u_t) \rangle_{L_2(\Xi,\mathbb{R})} P^{\eta}_t(\lambda,\mu) \\
&\qquad+ \langle \sigma_{x} (\bar x_t(\lambda),\bar u_t) + \sigma_{x} (\bar x_t(\mu),\bar u_t), Q^{\eta}_t(\lambda,\mu)\rangle_{L_2(\Xi,\mathbb{R})}\\
&\qquad+ \delta^{\ast}( l_{xx}(\bar x_t, \bar u_t) )  + \delta^{\ast}( b_{xx}(\bar x_t, \bar u_t) p_t(\lambda) )\\
&\qquad+ \delta^{\ast}( \langle \sigma_{xx}(\bar x_t, \bar u_t), q_t(\lambda) \rangle_{L_2(\Xi,\mathbb{R})}), Y_t(\lambda,\mu) \Big \rangle_{H^{-1}(\Lambda^2)\times H^1_0(\Lambda^2)} \mathrm{d}t\\
&+ \langle Q^{\eta}_t(\lambda,\mu), ( \sigma_{x}(\bar x_t(\lambda),\bar u_t) + \sigma_{x}(\bar x_t(\mu),\bar u_t) ) Y_t(\lambda,\mu) + \Psi_t(\lambda,\mu) \rangle_{L_2(\Xi,L^2(\Lambda^2))} \mathrm{d}t\\
&+ \langle ( ( \sigma_{x}(\bar x_t(\lambda),\bar u_t) + \sigma_{x}(\bar x_t(\mu),\bar u_t) ) Y_t(\lambda,\mu) + \Psi_t(\lambda,\mu) )^{\ast} Q^{\eta}_t(\lambda,\mu), \mathrm{d}W_t \rangle_{L^2(\Lambda^2)}\\
&+ \langle Q^{\eta}_t(\lambda,\mu)^{\ast} ( ( \sigma_{x}(\bar x_t(\lambda),\bar u_t) + \sigma_{x}(\bar x_t(\mu),\bar u_t) ) Y_t(\lambda,\mu) + \Psi_t(\lambda,\mu) ), \mathrm{d}W_t \rangle_{L^2(\Lambda^2)}.
\end{split}
\end{equation}
Integrating the Laplacian by parts yields
\begin{equation}
\begin{split}
& \mathrm{d}\langle P^{\eta}_t(\lambda,\mu),Y_t(\lambda,\mu) \rangle_{L^2(\Lambda^2)}\\
=& \Big [ \langle P^{\eta}_t(\lambda,\mu), \Phi_t(\lambda,\mu) \rangle_{L^2(\Lambda^2)} + \langle Q^{\eta}_t(\lambda,\mu), \Psi_t(\lambda,\mu) \rangle_{L_2(\Xi,L^2(\Lambda^2))}\\
&- \langle \delta^{\ast}( l_{xx}(\bar x_t, \bar u_t) )  + \delta^{\ast}( b_{xx}(\bar x_t, \bar u_t) p_t(\lambda) )\\
&\qquad + \delta^{\ast}( \langle \sigma_{xx}(\bar x_t, \bar u_t), q_t(\lambda) \rangle_{L_2(\Xi,\mathbb{R})}), Y_t(\lambda,\mu)  \rangle_{H^{-1}(\Lambda^2)\times H^1_0(\Lambda^2)} \Big ] \mathrm{d}t\\
&+ \langle ( ( \sigma_{x}(\bar x_t(\lambda),\bar u_t) + \sigma_{x}(\bar x_t(\mu),\bar u_t) ) Y_t(\lambda,\mu) + \Psi_t(\lambda,\mu) )^{\ast} Q^{\eta}_t(\lambda,\mu), \mathrm{d}W_t \rangle_{L^2(\Lambda^2)}\\
&+ \langle Q^{\eta}_t(\lambda,\mu)^{\ast} ( ( \sigma_{x}(\bar x_t(\lambda),\bar u_t) + \sigma_{x}(\bar x_t(\mu),\bar u_t) ) Y_t(\lambda,\mu) + \Psi_t(\lambda,\mu) ), \mathrm{d}W_t \rangle_{L^2(\Lambda^2)}.
\end{split}
\end{equation}
Therefore, taking expectations and considering the initial and terminal condition for $Y$ and $P^{\eta}$, respectively, we obtain
\begin{equation}\label{variational}
\begin{split}
&\mathbb{E} \left [ \langle h^{\eta}_{xx}, Y_T \rangle_{L^2(\Lambda^2)} \right ]\\
=& \mathbb{E} \Big [ \int_0^T \langle P^{\eta}_t,\Phi_t \rangle_{L^2(\Lambda^2)} + \langle Q^{\eta}_t, \Psi_t \rangle_{L^2(\Xi,L^2(\Lambda^2))}\\
&\qquad - \langle \delta^{\ast}( l_{xx}(\bar x_t, \bar u_t) )  + \delta^{\ast}( b_{xx}(\bar x_t, \bar u_t) p_t(\lambda) )\\
&\qquad+ \delta^{\ast}( \langle \sigma_{xx}(\bar x_t, \bar u_t), q_t(\lambda) \rangle_{L_2(\Xi,\mathbb{R})}, Y_t(\lambda,\mu)  \rangle_{H^{-1}(\Lambda^2)\times H^1_0(\Lambda^2)} \mathrm{d}t \Big ],
\end{split}
\end{equation}
which is the mollified second order adjoint state property \cref{secondadjointstateproperty}. Hence, the mollified second order adjoint state is characterized by equation \cref{secondorderadjoint}.

\section{Passing to the Limit of the Mollified Second Order Adjoint State}\label{limit}
In this section, we derive an equation for the second order adjoint state $P= \lim_{\eta\to 0} P^{\eta}$. Recall that we chose $h^{\eta}_{xx}$ in such a way, that
\begin{equation}
\lim_{\eta \to 0} h^{\eta}_{xx} = \delta^{\ast} (h_{xx}(\bar x_T(\lambda))) \quad \text{in}\; H^{-1}(\Lambda^2).
\end{equation}

\begin{theorem}\label{existencegeneralsecondorderadjointequation}
The equation
\begin{equation}\label{generalsecondorderadjoint}
\begin{cases}
\mathrm{d}P_t(\lambda,\mu) = - [ \Delta P_t(\lambda,\mu) + ( b_{x}(\bar x_t(\lambda),\bar u_t) + b_{x}(\bar x_t(\mu),\bar u_t) ) P_t(\lambda,\mu)\\
\qquad\qquad\qquad + \langle \sigma_{x}(\bar x_t(\lambda), \bar u_t), \sigma_{x}(\bar x_t(\mu), \bar u_t) \rangle_{L_2(\Xi,\mathbb{R})} P_t(\lambda,\mu) \\
\qquad\qquad\qquad + \langle \sigma_{x} (\bar x_t(\lambda),\bar u_t) + \sigma_{x} (\bar x_t(\mu),\bar u_t), Q_t(\lambda,\mu)\rangle_{L_2(\Xi,\mathbb{R})}\\
\qquad\qquad\qquad + \delta^{\ast}( l_{xx}(\bar x_t(\lambda), \bar u_t) ) + \delta^{\ast}( b_{xx}(\bar x_t(\lambda), \bar u_t) p_t(\lambda) )\\
\qquad\qquad\qquad + \delta^{\ast}( \langle \sigma_{xx}(\bar x_t(\lambda),\bar u_t), q_t \rangle_{L_2(\Xi,\mathbb{R})} ) ] \mathrm{d}t + Q_t(\lambda,\mu) \mathrm{d}W_t\\
P_T(\lambda,\mu) = \delta^{\ast}( h_{xx}(\bar x_T(\lambda)) )
\end{cases}
\end{equation}
has a unique adapted solution $(P,Q)$, where
\begin{equation}
P \in L^2([0,T]\times\Omega;L^2(\Lambda^2)) \cap L^2(\Omega; C([0,T];H^{-1}(\Lambda^2))),
\end{equation}
and
\begin{equation}
Q\in L^2([0,T]\times\Omega; L_2(\Xi;H^{-1}(\Lambda^2))).
\end{equation}
Here equation \cref{generalsecondorderadjoint} holds in $L^2([0,T]\times\Omega;H^{-2}(\Lambda^2))$.
\end{theorem}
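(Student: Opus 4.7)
The plan is to construct $(P,Q)$ as the limit of the mollified solutions $(P^{\eta}, Q^{\eta})$ from the previous section as $\eta \to 0$. Since the limiting terminal condition $\delta^{\ast}(h_{xx}(\bar x_T))$ lives only in $H^{-1}(\Lambda^2)$, one cannot work on the Gelfand triple $H^1_0 \hookrightarrow L^2 \hookrightarrow H^{-1}$ used for the mollified equation, and I would instead shift one level down and work on
\begin{equation*}
L^2(\Lambda^2) \hookrightarrow H^{-1}(\Lambda^2) \hookrightarrow H^{-2}(\Lambda^2).
\end{equation*}
On this triple, $-\Delta$ is continuous and coercive as an operator $L^2(\Lambda^2) \to H^{-2}(\Lambda^2)$, the zero-order multiplication terms containing bounded derivatives of $b$ and $\sigma$ act boundedly on $H^{-1}(\Lambda^2)$, and the source terms $\delta^{\ast}(l_{xx}(\bar x_t,\bar u_t))$, $\delta^{\ast}(b_{xx}(\bar x_t,\bar u_t) p_t)$, $\delta^{\ast}(\langle\sigma_{xx}(\bar x_t,\bar u_t),q_t\rangle_{L_2(\Xi,\mathbb R)})$ lie in $L^2([0,T]\times\Omega; H^{-1}(\Lambda^2))$ thanks to the $L^2(\Lambda)$-continuity of $\delta$ established after \cref{delta} and the integrability of $(p,q)$.

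The key step is to derive a uniform-in-$\eta$ energy estimate by applying It\^o's formula for variational BSPDEs on the shifted triple to $\|P^{\eta}_t\|^2_{H^{-1}(\Lambda^2)}$, using coercivity of $-\Delta$ to absorb $\|P^{\eta}_t\|^2_{L^2(\Lambda^2)}$ and Young's inequality to absorb the stochastic integral term producing $\|Q^{\eta}_t\|^2_{L_2(\Xi, H^{-1}(\Lambda^2))}$, followed by Gr\"onwall. Since $h^{\eta}_{xx}$ is bounded in $H^{-1}(\Lambda^2)$ uniformly in $\eta$ by construction (as a mollification of a fixed $H^{-1}(\Lambda^2)$ distribution), this yields
\begin{equation*}
\sup_{\eta>0}\Bigl( \e \sup_{t\in[0,T]} \| P^{\eta}_t \|^2_{H^{-1}(\Lambda^2)} + \e \int_0^T \| P^{\eta}_t \|^2_{L^2(\Lambda^2)} \mathrm{d}t + \e \int_0^T \| Q^{\eta}_t \|^2_{L_2(\Xi, H^{-1}(\Lambda^2))} \mathrm{d}t \Bigr) < \infty.
\end{equation*}
Applied to the difference $(P^{\eta}-P^{\eta'}, Q^{\eta}-Q^{\eta'})$ — which solves the same linear BSPDE with source terms cancelling and terminal condition $h^{\eta}_{xx}-h^{\eta'}_{xx} \to 0$ in $H^{-1}(\Lambda^2)$ — the same estimate gives the Cauchy property of $(P^{\eta},Q^{\eta})$ in the spaces appearing in the theorem.

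I would then extract the limit $(P,Q)$ and pass to the limit in the weak formulation of the mollified equation against test processes valued in $H^2_0(\Lambda^2)$; the boundedness of $b_x,\sigma_x$ and the strong/weak convergences above let every term converge, and the resulting equation holds in $L^2([0,T]\times\Omega; H^{-2}(\Lambda^2))$. Uniqueness is immediate from the same energy estimate applied to the difference of two hypothetical solutions, which satisfies a linear BSPDE with zero source and zero terminal condition. The main obstacle I foresee is the rigorous justification of It\^o's formula for variational BSPDEs on the shifted triple $L^2(\Lambda^2) \hookrightarrow H^{-1}(\Lambda^2) \hookrightarrow H^{-2}(\Lambda^2)$: the drift produced by $-\Delta$ takes values in $H^{-2}(\Lambda^2)$, while $Q$ takes values in $L_2(\Xi,H^{-1}(\Lambda^2))$, so one must verify that the classical formulas of \cite{pardoux1975, krylov2013} still apply, possibly by an additional smoothing-in-space step and a separate limit argument before the limit in $\eta$ is taken.
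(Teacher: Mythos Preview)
Your strategy coincides with the paper's: derive a uniform $H^{-1}$ energy estimate for $(P^{\eta},Q^{\eta})$, pass to the limit, and get uniqueness from the same estimate applied to differences. Two points are worth noting.

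First, the obstacle you flag at the end---justifying It\^o's formula on the shifted triple $L^2 \hookrightarrow H^{-1} \hookrightarrow H^{-2}$---is avoided in the paper by a simpler observation. Since $P^{\eta}$ already lies in $L^2([0,T]\times\Omega; H^1_0(\Lambda^2))$ by the well-posedness of the mollified equation, the drift term $\Delta P^{\eta}$ takes values in $H^{-1}(\Lambda^2)$, not merely $H^{-2}(\Lambda^2)$. Hence $P^{\eta}$ is an honest $H^{-1}(\Lambda^2)$-valued semimartingale, and the classical It\^o formula for Hilbert-space-valued semimartingales (Da~Prato--Zabczyk, \cite{daprato1992}) applies directly to $\|P^{\eta}_t\|^2_{H^{-1}(\Lambda^2)}$; the identity $\langle \Delta P^{\eta}_s, P^{\eta}_s\rangle_{H^{-1}} = -\|P^{\eta}_s\|^2_{L^2}$ then produces the coercive term. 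No additional smoothing step is needed.

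Second, where the paper extracts weakly convergent subsequences from the uniform bound and then invokes the variational regularity theorem (Theorem~4.2.5 in \cite{liu2015}) to recover $P \in L^2(\Omega; C([0,T];H^{-1}))$, you instead run the energy estimate on differences $P^{\eta}-P^{\eta'}$ to obtain the Cauchy property and hence strong convergence in all the stated spaces. Both routes are correct; yours gives the continuity in time directly without a separate appeal to a regularity result, at the modest cost of tracking the estimate once more for differences.
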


\begin{proof}
First, we prove existence of a solution. Let $(P^{\eta},Q^{\eta})$ denote the solution to equation \cref{secondorderadjoint}. We define $F:H^{-1}(\Lambda^2) \to \mathbb{R}$, $x\mapsto \|x\|_{H^{-1}(\Lambda^2)}^2$. Since $P^{\eta}$ is an $H^{-1}(\Lambda^2)$-valued semimartingale, we can apply the classical version of It\^o's formula for Hilbert space-valued semimartingales (see \cite{daprato1992}, Section 4.4), which yields
\begin{equation}\label{classicito}
\begin{split}
&\| P^{\eta}_t(\lambda,\mu) \|_{H^{-1}(\Lambda^2)}^2\\
= &\| h^{\eta}_{xx} \|_{H^{-1}(\Lambda^2)}^2 + 2\int_t^T \langle \Delta P^{\eta}_s(\lambda,\mu), P^{\eta}_s(\lambda,\mu) \rangle_{H^{-1}(\Lambda^2)} \mathrm{d}s\\
&+ 2\int_t^T \langle ( b_{x}(\bar x_s(\lambda),\bar u_s) + b_{x}(\bar x_s(\mu),\bar u_s)) P^{\eta}_s(\lambda,\mu) , P^{\eta}_s(\lambda,\mu) \rangle_{H^{-1}(\Lambda^2)} \mathrm{d}s\\
&+ 2\int_t^T \langle \langle \sigma_{x}(\bar x_s(\lambda), \bar u_s), \sigma_{x}(\bar x_s(\mu), \bar u_s) \rangle_{L_2(\Xi,\mathbb{R})} P^{\eta}_s(\lambda,\mu) , P^{\eta}_s(\lambda,\mu) \rangle_{H^{-1}(\Lambda^2)} \mathrm{d}s\\
&+ 2\int_t^T \langle \langle \sigma_{x} (\bar x_s(\lambda),\bar u_s) + \sigma_{x} (\bar x_s(\mu),\bar u_s), Q^{\eta}_s(\lambda,\mu)\rangle_{L_2(\Xi,\mathbb{R})} , P^{\eta}_s(\lambda,\mu) \rangle_{H^{-1}(\Lambda^2)} \mathrm{d}s\\
&+ 2\int_t^T \langle \delta^{\ast}( l_{xx}(\bar x_s(\lambda), \bar u_s) ) + \delta^{\ast}( b_{xx}(\bar x_s(\lambda), \bar u_s) p_s(\lambda) )\\
&\qquad\qquad\qquad + \delta^{\ast}( \langle \sigma_{xx}(\bar x_s(\lambda),\bar u_s), q_s \rangle_{L_2(\Xi,\mathbb{R})} ) , P^{\eta}_s(\lambda,\mu) \rangle_{H^{-1}(\Lambda^2)} \mathrm{d}s\\
&- \int_t^T \|Q^{\eta}_s \|^2_{L_2(\Xi,H^{-1}(\Lambda^2))} \mathrm{d}s + 2\int_t^T \langle P^{\eta}_s(\lambda,\mu), Q^{\eta}_s(\lambda,\mu) \mathrm{d}W_s \rangle_{H^{-1}(\Lambda^2)}.
\end{split}
\end{equation}
By Lemma 4.1.12 in \cite{liu2015}, we have
\begin{equation}
\langle \Delta P^{\eta}_s(\lambda,\mu), P^{\eta}_s(\lambda,\mu) \rangle_{H^{-1}(\Lambda^2)} = - \| P^{\eta}_s(\lambda,\mu) \|_{L^2(\Lambda^2)}^ 2.
\end{equation}
Therefore, from equation \cref{classicito} we derive
\begin{equation}
\begin{split}
&\| P^{\eta}_t(\lambda,\mu) \|_{H^{-1}(\Lambda^2)}^2 + 2\int_t^T \| P^{\eta}_s(\lambda,\mu) \|_{L^2(\Lambda^2)}^2 \mathrm{d}s + \int_t^T \|Q^{\eta}_s \|^2_{L_2(\Xi,H^{-1}(\Lambda^2))} \mathrm{d}s\\
\leq & \| h^{\eta}_{xx} \|_{H^{-1}(\Lambda^2)}^2 + C(b,\sigma,T,l) \left (1+ \int_t^T \| P^{\eta}_s(\lambda,\mu) \|_{H^{-1}(\Lambda^2)}^2 \mathrm{d}s \right )\\
& + 2\int_t^T \langle P^{\eta}_s(\lambda,\mu), Q^{\eta}_s(\lambda,\mu) \mathrm{d}W_s \rangle_{H^{-1}(\Lambda^2)}.
\end{split}
\end{equation}
Taking the supremum and expectations, using Burkholder-Davis-Gundy inequality for the stochastic integral, and applying Gr\"onwall's inequality, we obtain
\begin{equation}\label{apriori}
\begin{split}
&\mathbb{E} \left [ \sup_{t\in[0,T]} \| P_t^{\eta}(\lambda,\mu) \|_{H^{-1}(\Lambda^2)}^2 + 2\!\int_0^T \| P^{\eta}_s(\lambda,\mu) \|_{L^2(\Lambda^2)}^2 \mathrm{d}s + \!\int_0^T \| Q^{\eta}_s \|_{L_2(\Xi,H^{-1}(\Lambda^2))}^2 \mathrm{d}s \right ] \\
\leq &C \left ( 1+ \mathbb{E} \left [ \| h^{\eta}_{xx} \|_{H^{-1}(\Lambda^2)}^2 \right ] \right ),
\end{split}
\end{equation}
where the right-hand side is uniformly bounded in $\eta$. Therefore, we can extract weakly convergent subsequences
\begin{align}
P^{\eta} &\rightharpoonup P \quad \text{in}\; L^2([0,T]\times\Omega; L^2(\Lambda^2)),\\
Q^{\eta} &\rightharpoonup Q \quad\text{in}\; L^2([0,T]\times\Omega; L_2(\Xi,H^{-1}(\Lambda^2))),
\end{align}
which implies
\begin{equation}
\int_{\cdot}^T Q^{\eta}_s \mathrm{d}W_s \stackrel{*}{\rightharpoonup} \int_{\cdot}^T Q_s \mathrm{d}W_s \quad\text{in}\; L^{\infty}([0,T]; L^2(\Omega;H^{-1}(\Lambda^2))).
\end{equation}
Since $\Delta : L^2(\Lambda^2) \to H^{-2}(\Lambda^2)$ is weak-weak continuous, we can test the mollified second order adjoint equation \cref{secondorderadjoint} with a test function in $H^2_0(\Lambda^2)$ and pass to the limit $\eta \to 0$, which concludes the proof of existence. The continuity of $P$ as a process with values in $H^{-1}(\Lambda^2)$ follows from Theorem 4.2.5 in \cite{liu2015}. In order to prove uniqueness, we observe that, by the linearity of the equation, the difference of two solutions satisfies the corresponding equation with vanishing inhomogeneity and terminal condition. Hence, by an analogous argument as the one for the a priori bound \cref{apriori}, the two solutions must coincide. 
\end{proof}
\begin{remark}\label{changesoperator2}
In case the state equation \cref{stateequation} is governed by the more general uniformly elliptic differential operator $A$ given in equation \cref{generaloperator}, the Laplacian in equation \cref{generalsecondorderadjoint} is replaced by the operator $\bar A : H^1_0(\Lambda^2) \to H^{-1}(\Lambda^2)$, 
\begin{equation}
\bar Ax (\lambda,\mu ) := ( \partial_{\lambda} ( a(\lambda) \partial_{\lambda} x) + \partial_{\mu} ( a(\mu) \partial_{\mu} x))(\lambda,\mu).
\end{equation}
Therefore, we have to consider the functional
\begin{align}
F: H^{-1}(\Lambda^2)& \to \mathbb{R} \\
x&\mapsto \|x\|_{\mathcal{D}((-\bar A )^{-\frac12})}^2 = \| (I-\bar A)^{-\frac12} x \|^2_{L^2(\Lambda^2)}.
\end{align}
In this case, the term
\begin{equation}
\langle \Delta P^{\eta}_s(\lambda,\mu), P^{\eta}_s(\lambda,\mu) \rangle_{H^{-1}(\Lambda^2)}
\end{equation}
is replaced by
\begin{equation}
\langle \bar A P^{\eta}_s(\lambda,\mu), P^{\eta}_s(\lambda,\mu) \rangle_{\mathcal{D}((-\bar A)^{-\frac12})} = - \| P^{\eta}_s \|_{L^2(\Lambda^2)}^2 + \| P^{\eta}_s \|_{\mathcal{D}((-\bar  A )^{-\frac12})}^2.
\end{equation}
Now, using the same arguments as in the preceding proof, we obtain the corresponding result for $\bar A$.
\end{remark}
The following property of the second order adjoint state is not used hereafter, but is of independent interest.
\begin{proposition}\label{secondorderadjointproperty}
It holds
\begin{equation}
\begin{split}
& \mathbb{E} \left [ \int_0^T \int_{\Lambda} \left ( b_{xx}(\bar x_t(\lambda), \bar u_t) p_t(\lambda) + \langle \sigma_{xx}(\bar x_t(\lambda), \bar u_t), q_t(\lambda) \rangle_{L_2(\Xi,\mathbb{R})} \right ) y^{\varepsilon}_t(\lambda) y^{\varepsilon}_t(\lambda) \mathrm{d}\lambda \mathrm{d}t \right ] \\
& +\mathbb{E} \left [ \int_0^T \int_{\Lambda} l_{xx}(\bar x_t(\lambda), \bar u_t) y^{\varepsilon}_t(\lambda) y^{\varepsilon}_t(\lambda) \mathrm{d}\lambda \mathrm{d}t + \int_{\Lambda} h_{xx}(\bar x_T(\lambda)) y^{\varepsilon}_T(\lambda) y^{\varepsilon}_T(\lambda) \mathrm{d}\lambda \right ]\\
=& \mathbb{E} \left [ \int_0^T \langle P_t, \Phi^{\varepsilon}_t \rangle_{L^2(\Lambda^2)} + \langle Q_t, \Psi^{\varepsilon}_t \rangle_{L_2(\Xi,H^{-1}(\Lambda^2))\times L_2(\Xi,H^1_0(\Lambda^2))} \mathrm{d}t \right ],
\end{split}
\end{equation}
where $y^{\varepsilon}$ is the solution to equation \cref{yepsilon} and $\Phi^{\varepsilon}$ and $\Psi^{\varepsilon}$ are given by equations \cref{phi} and \cref{psi}, respectively.
\end{proposition}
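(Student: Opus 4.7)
The strategy is to apply the mollified second order adjoint state property \cref{secondadjointstateproperty} to the specific test pair $(\Phi, \Psi) = (\Phi^{\varepsilon}, \Psi^{\varepsilon})$ defined in \cref{phi}--\cref{psi}, and then to pass to the limit $\eta \to 0$ using the weak convergences $P^{\eta} \rightharpoonup P$ and $Q^{\eta} \rightharpoonup Q$ established in the proof of \Cref{existencegeneralsecondorderadjointequation}. By \Cref{tensorprocess}, the solution of the second variational equation \cref{secondvariationalequation} driven by this pair is precisely $Y^{\varepsilon}_t(\lambda, \mu) = y^{\varepsilon}_t(\lambda) y^{\varepsilon}_t(\mu)$, so the diagonal values $\delta(Y^{\varepsilon}_t)(\lambda) = y^{\varepsilon}_t(\lambda)^2$ appearing in $\mathcal T_2^{\eta}(\Phi^{\varepsilon}, \Psi^{\varepsilon})$ match the first two lines of the claimed identity verbatim. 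The task then reduces to controlling the terminal pairing involving $h^{\eta}_{xx}$ on the left-hand side and the two pairings involving $(P^{\eta}, Q^{\eta})$ on the right-hand side in the limit $\eta \to 0$.

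For the terminal pairing, recall from the beginning of \Cref{limit} that $h^{\eta}_{xx} \to \delta^{\ast}(h_{xx}(\bar x_T(\cdot)))$ in $H^{-1}(\Lambda^2)$, with $\|h^{\eta}_{xx}\|_{H^{-1}(\Lambda^2)}$ uniformly bounded in $\eta$ by boundedness of $h_{xx}$. Combined with $Y^{\varepsilon}_T \in L^2(\Omega; H^1_0(\Lambda^2))$ from \Cref{tensorprocess} and the $H^{-1}$--$H^1_0$ duality, dominated convergence yields
\begin{equation*}
\mathbb{E}\left[\int_{\Lambda^2} h^{\eta}_{xx}(\lambda, \mu) Y^{\varepsilon}_T(\lambda, \mu)\, \mathrm{d}\lambda\, \mathrm{d}\mu\right] \longrightarrow \mathbb{E}\left[\int_{\Lambda} h_{xx}(\bar x_T(\lambda)) y^{\varepsilon}_T(\lambda)^2\, \mathrm{d}\lambda\right],
\end{equation*}
which supplies the missing $h_{xx}$ term on the left-hand side.

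For the right-hand side, use that $P^{\eta} \rightharpoonup P$ weakly in $L^2([0,T]\times\Omega; L^2(\Lambda^2))$ together with $\Phi^{\varepsilon} \in L^2([0,T]\times\Omega; L^2(\Lambda^2))$ (cf.\ \Cref{tensorprocess}) to pass to the limit in the $P$-pairing directly. For the $Q$-pairing, exploit that $Q^{\eta}_t \in L_2(\Xi, L^2(\Lambda^2))$ to identify
\begin{equation*}
\langle Q^{\eta}_t, \Psi^{\varepsilon}_t \rangle_{L_2(\Xi, L^2(\Lambda^2))} = \langle Q^{\eta}_t, \Psi^{\varepsilon}_t \rangle_{L_2(\Xi, H^{-1}(\Lambda^2)) \times L_2(\Xi, H^1_0(\Lambda^2))},
\end{equation*}
and then apply the weak convergence $Q^{\eta} \rightharpoonup Q$ in $L^2([0,T]\times\Omega; L_2(\Xi, H^{-1}(\Lambda^2)))$ tested against $\Psi^{\varepsilon}$. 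Uniqueness of the weak limits asserted in \Cref{existencegeneralsecondorderadjointequation} ensures the argument is independent of the chosen subsequence.

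The main obstacle is the regularity check that $\Psi^{\varepsilon} \in L^2([0,T]\times\Omega; L_2(\Xi, H^1_0(\Lambda^2)))$, which is required for the $H^{-1}$--$H^1_0$ duality interpretation of the $Q$-pairing in the limit. Given the explicit form \cref{psi} of $\Psi^{\varepsilon}$, this requires combining the $H^1_0$-regularity of $y^{\varepsilon}$ (inherited from the first variational equation \cref{yepsilon}) and of $\bar x$ with chain-rule bounds for the Nemytskii operator $\sigma$, together with the one-dimensional Sobolev embedding $H^1(\Lambda) \hookrightarrow C(\overline{\Lambda})$ to control products of the tensorised factors in $H^1(\Lambda^2)$ and to ensure the vanishing trace on $\partial \Lambda^2$.
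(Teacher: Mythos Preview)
Your overall strategy---apply the mollified second order adjoint state property \eqref{secondadjointstateproperty} with $(\Phi,\Psi)=(\Phi^\varepsilon,\Psi^\varepsilon)$ and pass to the limit $\eta\to 0$---is exactly the paper's approach. However, your justification for the convergence of the terminal pairing contains an error. You invoke ``$Y^\varepsilon_T\in L^2(\Omega;H^1_0(\Lambda^2))$ from \Cref{tensorprocess}'', but that proposition only places $Y^\varepsilon$ in $L^2([0,T]\times\Omega;H^1_0(\Lambda^2))\cap L^2(\Omega;C([0,T];L^2(\Lambda^2)))$; this gives $H^1_0(\Lambda^2)$-regularity for almost every $t$, \emph{not} at the specific terminal time $T$. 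Indeed, the paper explicitly observes (just before introducing $h^\eta_{xx}$) that the spatial regularity of $Y^\varepsilon_T$ is insufficient for the diagonal pairing to be continuous---this is precisely why the mollification was introduced. The correct argument for
\[
\mathbb E\Big[\int_{\Lambda^2}h^\eta_{xx}(\lambda,\mu)\,Y^\varepsilon_T(\lambda,\mu)\,\mathrm d\lambda\,\mathrm d\mu\Big]\ \longrightarrow\ \mathbb E\Big[\int_\Lambda h_{xx}(\bar x_T(\lambda))\,y^\varepsilon_T(\lambda)^2\,\mathrm d\lambda\Big]
\]
is not the $H^{-1}$--$H^1_0$ duality but the heat-kernel approximation of the identity used to \emph{define} $h^\eta_{xx}$ in \eqref{meta}: since $Y^\varepsilon_T(\lambda,\mu)=y^\varepsilon_T(\lambda)y^\varepsilon_T(\mu)$ with $y^\varepsilon_T\in L^2(\Lambda)$, the integral becomes (after symmetrisation) $\int_\Lambda h_{xx}(\bar x_T(\lambda))\,y^\varepsilon_T(\lambda)\,(K_\eta\ast y^\varepsilon_T)(\lambda)\,\mathrm d\lambda$, and $K_\eta\ast y^\varepsilon_T\to y^\varepsilon_T$ in $L^2(\Lambda)$ suffices. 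This is the convergence the paper already recorded when setting up the mollification, and it is the only ingredient its one-line proof cites.

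A second, more delicate point concerns your sketch for $\Psi^\varepsilon\in L^2([0,T]\times\Omega;L_2(\Xi,H^1_0(\Lambda^2)))$. Chain-rule bounds and the embedding $H^1(\Lambda)\hookrightarrow C(\overline\Lambda)$ do give $H^1$-regularity of each tensor factor, but the vanishing trace on $\partial\Lambda^2$ does \emph{not} follow: on $\{\lambda_0\}\times\Lambda$ with $\lambda_0\in\partial\Lambda$ the trace of $\Psi^\varepsilon_t$ equals $(\sigma(0,u^\varepsilon_t)-\sigma(0,\bar u_t))\,y^\varepsilon_t(\mu)$, which need not vanish under the assumptions on $\sigma$. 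The paper's brief proof does not address this either, so this is a subtlety shared with the original rather than an error specific to your argument.
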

\begin{proof}
Since for $\eta \to 0$,
\begin{equation}
\mathbb{E} \left [  \int_{\Lambda^2} h^{\eta}_{xx}(\lambda,\mu) Y^{\varepsilon}_T(\lambda,\mu) \mathrm{d}\lambda \mathrm{d}\mu \right ] \to \mathbb{E} \left [ \int_{\Lambda} h_{xx}( \bar x_T(\lambda)) y^{\varepsilon}_T(\lambda) y^{\varepsilon}_T(\lambda) \mathrm{d}\lambda \right ],
\end{equation}
taking the limit $\eta \to 0$ in equation \cref{secondadjointstateproperty} yields the claim.
\end{proof}

\section{Main Result}\label{mainresult}
\noindent
In order to prove the stochastic maximum principle, we have to take the limits $\eta \to 0$ and $\varepsilon \to 0$ in the variational inequality \cref{variational1}. If we take the limit $\eta \to 0$ first, we get rid of the terms involving the terminal condition. However, the remaining term
\begin{equation}
\mathbb{E} \left [ \int_0^T \langle P_t, \Phi^{\varepsilon}_t \rangle_{L^2(\Lambda^2)} + \langle Q_t, \Psi^{\varepsilon}_t \rangle_{L_2(\Xi,H^{-1}(\Lambda^2))\times L_2(\Xi,H^1_0(\Lambda^2))} \mathrm{d}t \right ]
\end{equation}
does not have the asymptotic needed in \eqref{limitepsilon}. Indeed, the lacking regularity of $Q$ requires us to control $\Psi^{\varepsilon}$ in $L_2(\Xi,H^1_0(\Lambda^2))$. Since we can't expect such a control in general, we have to interchange the limits in $\eta$ and $\varepsilon$. In order to ensure convergence in the converse order, we need compactness of $y^{\varepsilon}_T$, $\varepsilon>0$, in $L^2(\Lambda)$.

\begin{lemma}\label{subsequence}
For $\gamma \in (0,1/2)$ and $\varepsilon \in (0, T-\tau)$, it holds
\begin{equation}
\mathbb{E} \left [ \| y^{\varepsilon}_T \|_{H_0^{\gamma}(\Lambda)}^2 \right ] \leq C \varepsilon.
\end{equation}
\end{lemma}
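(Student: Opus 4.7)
The plan is to use the mild formulation of the linear SPDE \cref{yepsilon} and exploit the smoothing properties of the heat semigroup $(S_t)_{t \geq 0}$ generated by $\Delta$ on $L^2(\Lambda)$ with Dirichlet boundary conditions. Writing
\begin{equation*}
y^{\varepsilon}_T = \int_0^T S_{T-s} \bigl[ b_{x}(\bar x_s, \bar u_s) y^{\varepsilon}_s \bigr] \mathrm{d}s
+ \int_\tau^{\tau+\varepsilon} S_{T-s} \bigl[ b(\bar x_s, v) - b(\bar x_s, \bar u_s) \bigr] \mathrm{d}s + \text{(two stochastic convolutions)},
\end{equation*}
one sees that the forcing from the spike is supported on the small time interval $[\tau, \tau+\varepsilon]$, while the endogenous terms carry a factor $y^\varepsilon_s$ of known size.

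Since $\gamma < 1/2$, we may identify $H^\gamma_0(\Lambda)$ with $H^\gamma(\Lambda)$, so that $\|f\|_{H^\gamma_0(\Lambda)}^2 \simeq \|(-\Delta)^{\gamma/2} f\|_{L^2(\Lambda)}^2$. The key smoothing bound, obtained from spectral calculus, is $\|(-\Delta)^{\gamma/2} S_t\|_{\mathcal{L}(L^2(\Lambda))} \leq C t^{-\gamma/2}$. Applying this to the deterministic term linear in $y^\varepsilon$, the Cauchy--Schwarz inequality together with $\int_0^T (T-s)^{-\gamma} \mathrm{d}s < \infty$ (valid because $\gamma < 1$) and the first bound in \cref{bounds} (with $k=1$) give
\begin{equation*}
\mathbb{E} \Bigl\| \int_0^T (-\Delta)^{\gamma/2} S_{T-s} b_x(\bar x_s,\bar u_s) y^\varepsilon_s \, \mathrm{d}s \Bigr\|_{L^2}^2 \leq C \int_0^T (T-s)^{-\gamma} \mathbb{E}\|y^\varepsilon_s\|_{L^2}^2 \, \mathrm{d}s \leq C \varepsilon.
\end{equation*}
For the stochastic convolution linear in $y^\varepsilon$, It\^o's isometry and the Hilbert--Schmidt bound $\|(-\Delta)^{\gamma/2} S_{T-s} g\|_{L_2(\Xi,L^2)} \leq C(T-s)^{-\gamma/2} \|g\|_{L_2(\Xi,L^2)}$ yield the same bound by an analogous computation.

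For the spike contributions, the factors $b(\bar x_s, v) - b(\bar x_s, \bar u_s)$ and $\sigma(\bar x_s, v) - \sigma(\bar x_s, \bar u_s)$ are uniformly bounded in $L^2$ by \ref{assumptioncoefficients} together with a-priori bounds on $\bar x$. A short convexity argument shows that for every $\varepsilon \in (0, T-\tau)$,
\begin{equation*}
\int_\tau^{\tau+\varepsilon} (T-s)^{-\gamma} \mathrm{d}s = \frac{(T-\tau)^{1-\gamma} - (T-\tau-\varepsilon)^{1-\gamma}}{1-\gamma} \leq \frac{(T-\tau)^{-\gamma}}{1-\gamma} \, \varepsilon,
\end{equation*}
the last inequality following from convexity of $\varepsilon \mapsto (T-\tau-\varepsilon)^{1-\gamma}$ pivoted at $\varepsilon = 0$ (equivalently, the slope-from-origin of the convex function vanishing at zero is increasing). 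Combining this with It\^o's isometry for the spike stochastic convolution and the Cauchy--Schwarz inequality for the spike deterministic convolution shows that both contributions are bounded by $C\varepsilon$. Summing the four estimates concludes the proof.

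The one mildly subtle step is the bound $\int_\tau^{\tau+\varepsilon}(T-s)^{-\gamma} \mathrm{d}s \leq C \varepsilon$ uniformly over the full range $\varepsilon \in (0, T-\tau)$; the restriction $\gamma < 1/2$ is not needed for this estimate itself but rather to ensure $H^\gamma_0(\Lambda) = H^\gamma(\Lambda)$ so that one can work directly with $\|(-\Delta)^{\gamma/2} \cdot\|_{L^2}$ without worrying about boundary trace conditions.
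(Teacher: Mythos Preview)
The proposal is correct and follows essentially the same approach as the paper: mild formulation via Duhamel's formula, analytic smoothing of the heat semigroup, and the $L^2$-bounds on $y^\varepsilon$ from \cref{bounds}. The only cosmetic differences are that the paper first normalizes to $\tilde y^\varepsilon = y^\varepsilon/\sqrt{\varepsilon}$ and uses the coarser smoothing estimate $\|S_t f\|_{H^\gamma_0}^2 \leq C t^{-2\gamma}\|f\|_{L^2}^2$, so that the restriction $\gamma<1/2$ enters through the integrability of $(T-s)^{-2\gamma}$ in the stochastic convolution rather than through the identification $H^\gamma_0=H^\gamma$ you invoke.
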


\begin{proof}
Set $\tilde y^{\varepsilon}_t := y^{\varepsilon}_t/\sqrt{\varepsilon}$, $t\in [0,T]$. Then $\tilde y^{\varepsilon}$ satisfies the equation
\begin{equation}
\begin{cases}
\mathrm{d}\tilde y^{\varepsilon}_t = \left [ \Delta \tilde y^{\varepsilon}_t + b_{x}(\bar x_t, \bar u_t)\tilde y^{\varepsilon}_t + \frac{1}{\varepsilon} \left ( b(\bar x_t, u^{\varepsilon}_t) - b(\bar x_t, \bar u_t) \right ) \right ] \mathrm{d}t\\
\qquad\qquad + \left [ \sigma_{x}(\bar x_t, \bar u_t) \bar y^{\varepsilon}_t + \frac{1}{\varepsilon} \left ( \sigma(\bar x_t, u^{\varepsilon}_t) - \sigma(\bar x_t, \bar u_t) \right ) \right ] \mathrm{d}W_t\\
\tilde y^{\varepsilon}_0 = 0,
\end{cases}
\end{equation}
Duhamel's formula for mild solutions yields
\begin{equation}
\begin{split}
\tilde y^{\varepsilon}_T = &\int_0^T S_{T-s} \left ( b_x(\bar x_s,\bar u_s) \tilde y^{\varepsilon}_s \right ) \mathrm{d}s + \frac{1}{\varepsilon} \int_{\tau}^{\tau+\varepsilon} S_{T-s} \left ( b( \bar x_s, v) - b( \bar x_s, \bar u_s) \right ) \mathrm{d}s\\
& \int_0^T S_{T-s} \left ( \sigma_x(\bar x_s,\bar u_s) \tilde y^{\varepsilon}_s \right ) \mathrm{d}W_s + \frac{1}{\varepsilon} \int_{\tau}^{\tau+\varepsilon} S_{T-s} \left ( \sigma( \bar x_s, v) - \sigma( \bar x_s, \bar u_s) \right ) \mathrm{d}W_s.
\end{split}
\end{equation}
Notice that the variational solution and the mild solution coincide, see \cite{hairer2009}, Proposition 5.7. By analyticity, we have the bound
\begin{equation}
\| S_t f \|_{H_0^{\gamma}(\Lambda)}^2 \leq \frac{C}{t^{2\gamma}} \| f\|_{L^2(\Lambda)}^2
\end{equation}
for any $f\in L^2(\Lambda)$, see \cite{pazy1983} Chapter 2, Lemma 6.13. Using this property and the boundedness of $b_x$, we can estimate
\begin{equation}
\mathbb{E} \left [ \left \| \int_0^T S_{T-s} \left ( b_x(\bar x_s,\bar u_s) \tilde y^{\varepsilon}_s \right ) \mathrm{d}s \right \|_{H_0^{\gamma}(\Lambda)}^2 \right ] \leq \sup_{t\in [0,T]} \mathbb{E} \left [ \| \tilde y^{\varepsilon}_t \|_{L^2(\Lambda)}^2 \right ] \int_0^T \frac{C}{(T-s)^{2\gamma}} \mathrm{d}s < \infty.
\end{equation}
Furthermore, for the second integral, we obtain
\begin{equation}
\begin{split}
&\mathbb{E} \left [ \left \| \frac{1}{\varepsilon} \int_{\tau}^{\tau+\varepsilon} S_{T-s} \left ( b( \bar x_s, v) - b( \bar x_s, \bar u_s) \right ) \mathrm{d}s \right \|_{H_0^{\gamma}(\Lambda)}^2 \right ]\\
\leq& \frac{C}{\varepsilon} \mathbb{E} \left [ \int_{\tau}^{\tau+\varepsilon} \frac{1}{(T-s)^{2\gamma}} \left \| b(\bar x_s,v)-b(\bar x_s,\bar u_s) \right \|_{L^2(\Lambda)}^2 \mathrm{d}s \right ]
\end{split}
\end{equation}
Since $\tau <T$, using the bounds on $b$ we obtain
\begin{equation}
\begin{split}
&\frac{C}{\varepsilon} \mathbb{E} \left [ \int_{\tau}^{\tau+\varepsilon} \frac{1}{(T-s)^{2\gamma}} \left \| b(\bar x_s,v)-b(\bar x_s,\bar u_s) \right \|_{L^2(\Lambda)}^2 \mathrm{d}s \right ]\\
\leq& \frac{C}{\varepsilon} \int_{\tau}^{\tau+\varepsilon} \mathbb{E} \left [ 1+\| \bar x_s \|_{L^2(\Lambda)}^2 + \| v\|_{\mathcal{U}}^2 + \| \bar u_s \|_{\mathcal{U}}^2 \right ] \mathrm{d}s\\
\leq& C\left ( 1 + \sup_{t\in [0,T]} \mathbb{E} \left [ \| \bar x_t \|_{L^2(\Lambda)}^2 \right ] + \| v\|_{\mathcal{U}}^2 + \sup_{t\in [0,T]} \mathbb{E} \left [ \| \bar u_t \|_{\mathcal{U}}^2 \right ] \right ) < \infty.
\end{split}
\end{equation}
Now we consider the first stochastic integral.
\begin{equation}
\begin{split}
&\mathbb{E} \left [ \left \| \int_0^T S_{T-s} \left ( \sigma_x(\bar x_s,\bar u_s) \tilde y^{\varepsilon}_s \right ) \mathrm{d}W_s \right \|_{H_0^{\gamma}(\Lambda)}^2 \right ]\\
=& \mathbb{E} \left [ \int_0^T \left \| S_{T-s} ( \sigma_x(\bar x_s,\bar u_s) \tilde y^{\varepsilon}_s) \right \|_{L_2(\Xi,H^{\gamma}_0(\Lambda))}^2 \mathrm{d}s \right ]\\
\leq& \mathbb{E} \left [ \int_0^T \| S_{T-s} \|_{L(L^2(\Lambda),H^{\gamma}_0(\Lambda))}^2 \| \sigma_x(\bar x_s,\bar u_s) \|_{L(L^2(\Lambda),L_2(\Xi,L^2(\Lambda)))} \| \tilde y^{\varepsilon}_s \|_{L^2(\Lambda)}^2 \mathrm{d}s \right ],
\end{split}
\end{equation}
which can be controlled by the same arguments as for the corresponding term with $b$, since $\sigma_x$ is bounded as well. Finally, for the second stochastic integral, we have
\begin{equation}
\begin{split}
&\mathbb{E} \left [ \left \| \frac{1}{\varepsilon} \int_{\tau}^{\tau+\varepsilon} S_{T-s} \left ( \sigma( \bar x_s, v) - \sigma( \bar x_s, \bar u_s) \right ) \mathrm{d}W_s \right \|_{H_0^{\gamma}(\Lambda)}^2 \right ]\\
=&\mathbb{E} \left [ \frac{1}{\varepsilon} \int_{\tau}^{\tau+\varepsilon} \left \| S_{T-s} \left ( \sigma( \bar x_s, v) - \sigma( \bar x_s, \bar u_s) \right ) \right \|_{L_2(\Xi,H_0^{\gamma}(\Lambda))}^2 \mathrm{d}s \right ]
\end{split}
\end{equation}
which is again finite by the same arguments as for the corresponding term with $b$.
\end{proof}

Now we are able to prove our main result.

\begin{theorem}[Stochastic Maximum Principle]
Let $(\bar x, \bar u)$ be an optimal pair of the control problem \cref{costfunctional} and \cref{stateequation}. Then there exist adapted processes $(p,q)$, where
\begin{equation}
p \in L^2 ([0,T]\times\Omega;H^1_0(\Lambda))\cap L^2(\Omega;C([0,T];L^2(\Lambda)))
\end{equation}
and
\begin{equation}
q\in L^2([0,T]\times\Omega;L_2(\Xi,L^2(\Lambda))),
\end{equation}
satisfying the first order adjoint equation
\begin{equation}
\begin{cases}
\mathrm{d}p_t = -\left [ \Delta p_t + b_{x}(\bar x_t,\bar u_t) p_t + \langle \sigma_{x}(\bar x_t,\bar u_t), q_t \rangle_{L_2(\Xi,\mathbb{R})} + l_{x}(\bar x_t,\bar u_t) \right ]\mathrm{d}t + q_t \mathrm{d}W_t\\
p_t = h_{x}(\bar x_t),
\end{cases}
\end{equation}
and adapted processes $(P,Q)$, where
\begin{equation}
P \in L^2([0,T]\times\Omega;L^2(\Lambda^2)) \cap L^2(\Omega;C([0,T];H^{-1}(\Lambda)))
\end{equation}
and
\begin{equation}
Q\in L^2([0,T]\times\Omega;L_2(\Xi,H^{-1}(\Lambda^2))),
\end{equation}
satisfying the second order adjoint equation
\begin{equation}
\begin{cases}
\mathrm{d}P_t(\lambda,\mu) = - [ \Delta P_t(\lambda,\mu) + ( b_{x}(\bar x_t(\lambda),\bar u_t)) + b_{x}(\bar x_t(\mu),\bar u_t) ) P_t(\lambda,\mu)\\
\qquad\qquad\qquad + \langle \sigma_{x}(\bar x_t(\lambda), \bar u_t), \sigma_{x}(\bar x_t(\mu), \bar u_t) \rangle_{L_2(\Xi,\mathbb{R})} P_t(\lambda,\mu) \\
\qquad\qquad\qquad + \langle \sigma_{x} (\bar x_t(\lambda),\bar u_t) + \sigma_{x} (\bar x_t(\mu),\bar u_t), Q_t(\lambda,\mu)\rangle_{L_2(\Xi,\mathbb{R})}\\
\qquad\qquad\qquad + \delta^{\ast}( l_{xx}(\bar x_t(\lambda), \bar u_t) ) + \delta^{\ast}( b_{xx}(\bar x_t(\lambda), \bar u_t) p_t(\lambda) )\\
\qquad\qquad\qquad + \delta^{\ast}( \langle \sigma_{xx}(\bar x_t(\lambda),\bar u_t), q_t \rangle_{L_2(\Xi,\mathbb{R})} ) ] \mathrm{d}t + Q_t(\lambda,\mu) \mathrm{d}W_t\\
P_T(\lambda,\mu) = \delta^{\ast}( h_{xx}(\bar x_T(\lambda)) ),
\end{cases}
\end{equation}
such that
\begin{equation}
\begin{split}
&\mathcal H(\bar x_t,v,p_t,q_t) -\mathcal H(\bar x_t,\bar u_t,p_t,q_t)\\
+ &\frac12 \Big \langle P_t(\lambda,\mu),\\
&\qquad \langle \sigma(\bar x_t(\lambda), v) - \sigma(\bar x_t(\lambda), \bar u_t) , \sigma(\bar x_t(\mu), v) - \sigma(\bar x_t(\mu), \bar u_t) \rangle_{L_2(\Xi,\mathbb{R})} \Big \rangle_{L^2(\Lambda^2)} \geq 0
\end{split}
\end{equation}
for all $v\in U$, and almost all $(t,\omega) \in [0,T]\times \Omega$. Here we denote by $\mathcal H$ the Hamiltonian, i.e. $\mathcal H: L^2(\Lambda) \times U \times L^2(\Lambda)\times L_2(\Xi,L^2(\Lambda)) \to \mathbb{R}$,
\begin{equation}
\mathcal H(x,u,p,q) := \int_{\Lambda} l(x(\lambda),u) \mathrm{d}\lambda + \langle p, b(x,u) \rangle_{L^2(\Lambda)} + \langle q, \sigma(x,u) \rangle_{L_2(\Xi,L^2(\Lambda))}.
\end{equation}
\end{theorem}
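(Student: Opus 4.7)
The plan is to pass to the limit in the variational inequality \cref{variational1} in the order $\varepsilon \to 0$, then $\eta \to 0$, and finally to extract the pointwise Hamiltonian inequality via Lebesgue's differentiation theorem and a standard measurable-selection argument. The chosen order of the two limits is forced: if one were to let $\eta \to 0$ first, the remaining term $\langle Q_t, \Psi^{\varepsilon}_t\rangle$ would pair $Q \in L^2(\Xi, H^{-1})$ with $\Psi^{\varepsilon} \in L_2(\Xi, L^2)$, which cannot be controlled without additional regularity of $\Psi^{\varepsilon}$.

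First, I will divide \cref{variational1} by $\varepsilon$ and show that, for each fixed $\eta > 0$, the following terms are $o(1)$ as $\varepsilon \to 0$: the $\langle Q^{\eta}_t, \Psi^{\varepsilon}_t\rangle$ term and every summand of $\langle P^{\eta}_t, \Phi^{\varepsilon}_t\rangle$ except the pure cross-term $\langle \sigma(\bar x_t(\lambda), u^{\varepsilon}_t) - \sigma(\bar x_t(\lambda), \bar u_t), \sigma(\bar x_t(\mu), u^{\varepsilon}_t) - \sigma(\bar x_t(\mu), \bar u_t)\rangle_{L_2(\Xi, \mathbb{R})}$. Indeed, all other summands carry a factor $y^{\varepsilon}$ of size $O(\sqrt{\varepsilon})$ in $L^2(\Lambda)$ by \cref{bounds}, and paired with $(P^{\eta}, Q^{\eta}) \in L^2$ they yield contributions of order $\varepsilon^{3/2}$ over the spike interval of length $\varepsilon$. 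The first and third lines of \cref{variational1} are the classical spike-variation contributions, which converge by Lebesgue differentiation.

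Next, I will handle the terminal difference, which is the crux. Setting $\tilde y^{\varepsilon} := y^{\varepsilon}/\sqrt{\varepsilon}$, \cref{subsequence} gives a uniform bound of $\tilde y^{\varepsilon}_T$ in $L^2(\Omega; H^{\gamma}_0(\Lambda))$, and the compact embedding $H^{\gamma}_0(\Lambda) \hookrightarrow L^2(\Lambda)$ produces a subsequence along which $\tilde y^{\varepsilon}_T \to \tilde y_T$ strongly in $L^2(\Omega; L^2(\Lambda))$, hence $\tilde Y^{\varepsilon}_T := Y^{\varepsilon}_T/\varepsilon \to \tilde y_T \otimes \tilde y_T$ in $L^1$. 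Along this subsequence the terminal difference in \cref{variational1} divided by $\varepsilon$ converges to $\tfrac12 \mathbb{E}\bigl[\int_{\Lambda} h_{xx}(\bar x_T(\lambda)) \tilde y_T(\lambda) \tilde y_T(\lambda) \, \mathrm{d}\lambda - \int_{\Lambda^2} h^{\eta}_{xx}(\lambda, \mu) \tilde y_T(\lambda) \tilde y_T(\mu) \, \mathrm{d}\lambda \mathrm{d}\mu\bigr]$, which tends to $0$ as $\eta \to 0$ by the mollification defining $h^{\eta}_{xx}$.

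Finally, in the surviving $\Phi^{\varepsilon}$ cross-term, the test function $(\sigma - \sigma) \otimes (\sigma - \sigma)$ lies in $L^2([0,T] \times \Omega; L^2(\Lambda^2))$ by the linear growth of $\sigma$; testing the integrated inequality against an arbitrary non-negative $\phi(\tau)$ and using the weak convergence $P^{\eta} \rightharpoonup P$ in $L^2([0,T] \times \Omega; L^2(\Lambda^2))$ from the proof of \cref{existencegeneralsecondorderadjointequation} allows passage to the limit $\eta \to 0$. Lebesgue's differentiation theorem then yields the pointwise inequality at almost every $\tau \in (0, T)$ for deterministic $v \in U$, and a standard measurable-selection argument as in \cite{peng1990} extends it to $\mathcal{F}_{\tau}$-measurable $U$-valued $v$, giving the claim. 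The main obstacle is the terminal step: the distributional character of the limiting terminal condition $\delta^{\ast}(h_{xx}(\bar x_T))$ of the second order adjoint equation forces the $\eta$-regularization, and the higher space-regularity bound of \cref{subsequence}, combined with Rellich--Kondrachov compactness, is what bridges the resulting gap and permits taking the $\varepsilon$-limit before the $\eta$-limit.
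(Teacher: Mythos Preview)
Your proposal is correct and follows essentially the same route as the paper: divide \cref{variational1} by $\varepsilon$, let $\varepsilon\to 0$ first (using \cref{bounds} to kill all terms in $\Phi^{\varepsilon},\Psi^{\varepsilon}$ carrying a factor $y^{\varepsilon}$, and \cref{subsequence} plus Rellich--Kondrachov to control the terminal difference), then let $\eta\to 0$. You are somewhat more explicit than the paper in two places---spelling out the weak convergence $P^{\eta}\rightharpoonup P$ in $L^2([0,T]\times\Omega;L^2(\Lambda^2))$ to pass to the limit in the surviving quadratic term, and invoking the measurable-selection argument to upgrade from deterministic $v$ to the full pointwise statement---but these are details the paper leaves implicit rather than genuine differences in strategy.
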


\begin{proof}
Equation \cref{variational1} states
\begin{equation}
\begin{split}
&\mathbb{E} \left [ \int_0^T \left \langle b(\bar x_t, u^{\varepsilon}_t) - b(\bar x_t, \bar u_t) , p_t \right \rangle_{L^2(\Lambda)} + \left \langle \sigma(\bar x_t, u^{\varepsilon}_t) - \sigma( \bar x_t, \bar u_t) , q_t \right \rangle_{L_2(\Xi,L^2(\Lambda))} \mathrm{d}t \right ]\\
&+ \frac12 \mathbb{E} \left [ \int_0^T \langle P^{\eta}_t, \Phi^{\varepsilon}_t \rangle_{L^2(\Lambda^2)} + \langle Q^{\eta}_t, \Psi^{\varepsilon}_t \rangle_{L_2(\Xi,L^2(\Lambda^2))} \mathrm{d}t \right ]\\
&+ \mathbb{E} \left [ \int_0^T \int_{\Lambda} l(\bar x_t(\lambda), u^{\varepsilon}_t) - l( \bar x_t(\lambda), \bar u_t) \mathrm{d}\lambda \mathrm{d}t \right ] \\
&+\frac12 \mathbb{E} \left [ \int_{\Lambda} h_{xx}( \bar x_T(\lambda)) y^{\varepsilon}_T(\lambda) y^{\varepsilon}_T(\lambda) \mathrm{d}\lambda - \int_{\Lambda^2} h^{\eta}_{xx}(\lambda,\mu) Y^{\varepsilon}_T(\lambda,\mu) \mathrm{d}\lambda \mathrm{d}\mu \right ] \geq o(\varepsilon).
\end{split}
\end{equation}
Localizing by dividing by $\varepsilon$ and taking the limit $\varepsilon\to 0$ yields 
\begin{equation}\label{limitepsilon}
\begin{split}
&\frac{1}{\varepsilon} \mathbb{E} \left [ \int_0^T \left \langle b(\bar x_t, u^{\varepsilon}_t) - b(\bar x_t, \bar u_t) , p_t \right \rangle_{L^2(\Lambda)} + \left \langle \sigma(\bar x_t, u^{\varepsilon}_t) - \sigma( \bar x_t, \bar u_t) , q_t \right \rangle_{L_2(\Xi,L^2(\Lambda))} \mathrm{d}t \right ]\\
&+ \! \frac{1}{2\varepsilon} \mathbb{E}\! \left [ \int_0^T \langle P^{\eta}_t, \Phi^{\varepsilon}_t \rangle_{L^2(\Lambda^2)} + \langle Q^{\eta}_t, \Psi^{\varepsilon}_t \rangle_{L_2(\Xi,L^2(\Lambda^2))} \mathrm{d}t \right ]\\
&+ \frac{1}{\varepsilon} \mathbb{E} \left [ \int_0^T \int_{\Lambda} l(\bar x_t(\lambda), u^{\varepsilon}_t) - l( \bar x_t(\lambda), \bar u_t) \mathrm{d}\lambda \mathrm{d}t \right ] \\
\to& \left \langle b(\bar x_{\tau}, v) - b(\bar x_{\tau}, \bar u_{\tau}) , p_{\tau} \right \rangle_{L^2(\Lambda)} + \left \langle \sigma(\bar x_{\tau}, v) - \sigma( \bar x_{\tau}, \bar u_{\tau}) , q_{\tau} \right \rangle_{L_2(\Xi,L^2(\Lambda))}\\
&+ \int_{\Lambda} l(\bar x_{\tau}(\lambda), v) - l( \bar x_{\tau}(\lambda), \bar u_{\tau}) \mathrm{d}\lambda\\
&+ \frac12 \Big \langle P^{\eta}_{\tau}(\lambda,\mu),\! \langle \sigma(\bar x_{\tau}(\lambda), v)\! -\! \sigma(\bar x_{\tau}(\lambda), \bar u_{\tau}) , \sigma(\bar x_{\tau}(\mu), v)\! -\! \sigma(\bar x_{\tau}(\mu), \bar u_{\tau}) \rangle_{L_2(\Xi,\mathbb{R})} \Big \rangle_{L^2(\Lambda^2)}\!.
\end{split}
\end{equation}
Notice that all but the remaining term in
\begin{equation}
\mathbb{E} \left [ \int_0^T \langle P^{\eta}_t, \Phi^{\varepsilon}_t \rangle_{L^2(\Lambda^2)} + \langle Q^{\eta}_t, \Psi^{\varepsilon}_t \rangle_{L_2(\Xi,L^2(\Lambda^2))} \mathrm{d}t \right ]
\end{equation}
are of order $o(\varepsilon)$. It remains to prove that
\begin{equation}
\lim_{\varepsilon\to 0} \frac{1}{\varepsilon} \mathbb{E} \left [ \int_{\Lambda} h_{xx}( \bar x_T(\lambda)) y^{\varepsilon}_T(\lambda) y^{\varepsilon}_T(\lambda) \mathrm{d}\lambda - \int_{\Lambda^2} h^{\eta}_{xx}(\lambda,\mu) Y^{\varepsilon}_T(\lambda,\mu) \mathrm{d}\lambda \mathrm{d}\mu \right ]
\end{equation}
vanishes in the limit $\eta\to 0$. Using \cref{subsequence} and the compact embedding $H^{\gamma}_0(\Lambda) \subset \subset L^2(\Lambda)$, $\gamma \in (0,1/2)$ (see e.g. \cite{demengel2012}, Theorem 4.54), we can extract a subsequence of $y^{\varepsilon}_T/\sqrt{\varepsilon}$ converging in $L^2(\Lambda)$ to some $\tilde y_T\in L^2(\Lambda)$. Therefore
\begin{equation}
\begin{split}
&\lim_{\varepsilon\to 0} \frac{1}{\varepsilon} \mathbb{E} \left [ \int_{\Lambda} h_{xx}( \bar x_T(\lambda)) y^{\varepsilon}_T(\lambda) y^{\varepsilon}_T(\lambda) \mathrm{d}\lambda - \int_{\Lambda^2} h^{\eta}_{xx}(\lambda,\mu) Y^{\varepsilon}_T(\lambda,\mu) \mathrm{d}\lambda \mathrm{d}\mu \right ]\\
=& \mathbb{E} \left [ \int_{\Lambda} h_{xx}( \bar x_T(\lambda)) \tilde y_T(\lambda) \tilde y_T(\lambda) \mathrm{d}\lambda - \int_{\Lambda^2} h^{\eta}_{xx}(\lambda,\mu) \tilde y_T(\lambda) \tilde y_T(\mu) \mathrm{d}\lambda \mathrm{d}\mu \right ],
\end{split}
\end{equation}
which vanishes in the limit $\eta \to 0$. This concludes the proof.
\end{proof}

\section*{Acknowledgement} This work has been funded by Deutsche Forschungsgemeinschaft (DFG) through grant CRC 910 ``Control of self-organizing nonlinear systems: Theoretical methods and concepts of application,'' Project (A10) ``Control of stochastic mean-field equations with applications to brain networks.'' The authors would like to thank the referees for their valuable feedback.



\end{document}